\newtheorem{main}{Theorem}
\newtheorem{theorem}{Theorem}[section]
\newtheorem{corollary}{Corollary}[theorem]
\newtheorem{lemma}[theorem]{Lemma}
\theoremstyle{definition}
\DeclareMathSymbol{\shortminus}{\mathbin}{AMSa}{"39}
\newcommand{\medminus}{\shortminus\!}
    \title{Cassini sets in taxicab geometry}
    \author{Alexander Habib}
    \address{Department of Mathematics, Ohio State University, USA}
    \email{habib.84@osu.edu}
    \author{Dylan Helliwell}
    \address{Department of Mathematics, Seattle University, USA}
    \email{helliwed@seattleu.edu}
    \thanks{We are grateful to our reviewer who provided helpful suggestions for improvement of this paper.  Any remaining errors are our own.}
    \date{\today}
\keywords{Taxicab geometry, Cassini Sets}
\subjclass[2020]{51M05, 51M15}
\begin{document}
    
    \begin{abstract}
        Given two points $p$ and $q$ in the plane and a nonnegative number $r$, the Cassini oval is the set of points $x$ that satisfy $d(x, p) d(x, q) = r^2$.  In this paper, we study this set using the taxicab metric.  We find that these sets have characteristics that are qualitatively similar to their Euclidean counterparts while also reflecting the underlying taxicab structure.  We provide a geometric description of these sets and provide a characterization in terms of intersections and unions of a restricted family of such sets analogous to that found recently for taxicab Apollonian sets.
    \end{abstract}
    \maketitle
    
    \section{Introduction}        
        The Taxicab plane is $(\mathbb{R}^2, d)$ where $d$ is the taxicab distance function
        \begin{equation*}
            d(p, q) = |p_1 - q_1| + |p_2 - q_2|.
        \end{equation*}
        
This space is well studied as an alternative to the Euclidean plane, and a number of interesting geometric properties have been explored.  See for example \cite{Krause, Reynolds, Laatsch, BCFHMNSTV, FHS} and the references therein for a variety of examples.

Conic sections have been a particular area of interest \cite{Reynolds, KAGO} and recently, Apollonian sets have been studied \cite{BCFHMNSTV}.  Together, these sets arise by requiring that the sum, difference, or ratio of distances to two fixed points be constant.
        
       In this paper, we add to this family by analyzing sets that are defined by requiring that the product of taxicab distances to two fixed points be constant.  More formally, given two points $p$ and $q$ in $\mathbb{R}^2$, and  $r \in [0,\infty)$, we explore the Cassini set
        \begin{equation*}
            K(p, q; r) = \left\{ x \in \mathbb{R}^2: d(x, p) \cdot d(x, q) = r^2 \right\}.
        \end{equation*}
Such sets have been explored somewhat in \cite{MartiniWu, JahnMartiniRichter} where analytical properties of Cassini sets and others were studied using a variety of norms and gauges.

        Using the Euclidean distance $d_E$, such curves, shown in Figure~\ref{euclideancassinifig}, arise in a variety of settings, making their first appearance in classical Greek geometry through the study of spiric sections, a specific family of toric sections, and then later as analytic geometry developed \cite{Stillwell}.  These curves also arise in physical settings.  A contemporary of Sir Isaac Newton, Jean-Dominique Cassini \cite{Cassini} studied convex representatives as a possible model for celestial motion, hence their name; and more recently, Cassini ovals have appeared in examples from electrostatics \cite{Jackson}.

        While the algebraic equations describing ellipses, hyperbolas, and Apollonian circles are quadratic, those for Euclidean Cassini ovals are quartic, doubling the degree of the polynomials required.  Letting $r_E^* = \frac{1}{2} d_E(p, q)$, if $r < r_E^*$, the Cassini oval comprises two simple closed curves.  When $r = r_E^*$ the set is a lemniscate, and for $r > r_E^*$ it is a single simple closed curve.  If $r \geq \sqrt{2} r_E^*$, the curve is convex.  If $d_E(p, q) = 0$, the set is a circle.

        Shifting focus to the taxicab setting, we find that while Cassini sets reflect the taxicab metric in their specific shape, see Figure~\ref{cassinifig}, they nonetheless share some qualitative similarities with their Euclidean counterparts.  Specifically, while taxicab ellipses, hyperbolas, and Apollonian sets are piecewise linear, taxicab Cassini sets have portions that are quadratic, again doubling the degree.  Also, taxicab Cassini sets experience a transition at $r = r^* = \frac{1}{2} d(p, q)$ from a pair of simple closed curves to a single simple closed curve.  Unlike the Euclidean case, taxicab Cassini sets are never convex in the geodesic sense and convex in the linear sense only when $d(p, q) = 0$ and the set is a taxicab circle.  These observations are formalized in our first main result, Theorem~\ref{descriptionthm} in Section~\ref{descriptionsec}.
        
        \begin{figure}
            \begin{picture}(200,140)
                \put(0,-10){
                    \includegraphics[scale = .8, clip = true, draft = false]{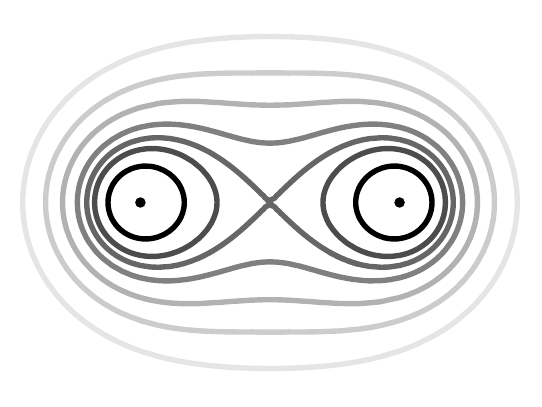}
                }
                \put(60,67){$p$}
                \put(148,67){$q$}
            \end{picture}
            
            \caption{
                Cassini ovals in Euclidean space with lighter shading corresponding to increasing $r$.  The second-from-outermost curve corresponds to $r = \sqrt{2}\,r_E^*$.
            }\label{euclideancassinifig}
        \end{figure}
          
        \begin{figure}
        \begin{picture}(360,80)
        
        \put(5,-10){
        \includegraphics[scale = .4, clip = true, draft = false]{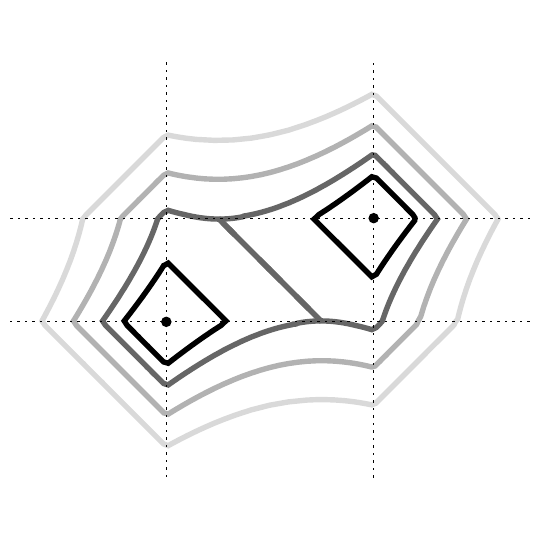}
        \put(-59,5){(a)}
        }
        
        \put(125,-10){
        \includegraphics[scale = .4, clip = true, draft = false]{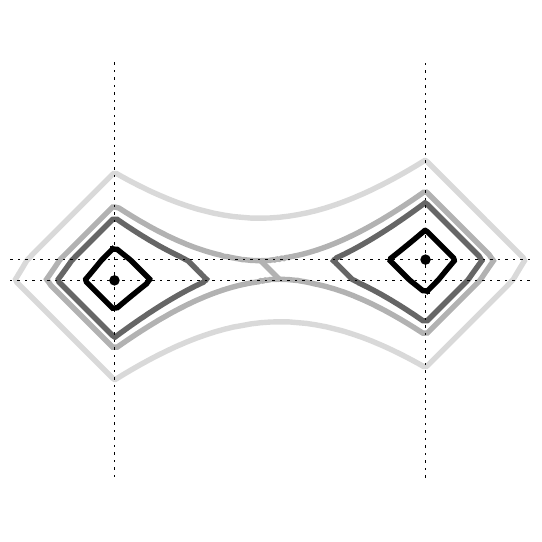}
         \put(-59,5){(b)}
        }
        
        \put(245,-10){
        \includegraphics[scale = .4, clip = true, draft = false]{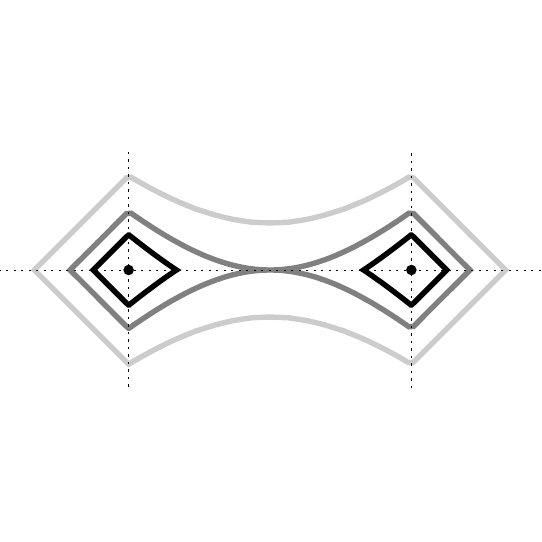}
        \put(-59,5){(c)}
        }        
        \end{picture}
        
        \caption{Cassini sets when the foci are in general position (a) and (b), and when the foci share a coordinate line (c). As in the Euclidean case, a transition from a pair of simple closed curves to a single simple closed curve occurs at $r = r^* = \frac{1}{2} d(p, q)$.} \label{cassinifig}
        \end{figure}               
        
        In somewhat of a surprise, in \cite{BCFHMNSTV}, it was shown that taxicab Apollonian sets could be characterized in terms of unions of those whose focal points lie on lines with slope $\pm 1$.  We find that taxicab Cassini sets enjoy a similar characterization.  Lines of slope $\pm 1$ play an important role in taxicab geometry so we call them guide lines, and we say a \textit{guide Cassini set} is a Cassini set whose focal points share a guide line.  With this, we are able to show that any Cassini set can be characterised in terms of four specific guide Cassini sets, and that the characterization can occur in two different ways.
        Together, these are presented as Theorems~\ref{unionofintersectionsthm} and \ref{intersectionofunionsthm} in Section~\ref{characterizationsec}.
        
        We conclude the paper with some remarks about emerging themes and tthe ways in which this work may be extended in Section~\ref{conclusionsec}.

    
    \section{Supporting structure} \label{backgroundsec}

        We provide here a variety of structural definitions and establish some background results to support our main results.

        \subsection{Points and lines}
            Given a point $p = (p_1, p_2)$ we define the \textit{coordinate lines} through $p$ to be the lines
            \begin{align*}
                cl^1(p) &= \{x \in \mathbb{R}^2:
                    x_1 = p_1\}, \\
                cl^2(p) &= \{x \in \mathbb{R}^2:
                    x_2 = p_2\},
            \end{align*}
            and we define the \textit{guide lines} through $p$ to be the lines
            \begin{align*}
            gl^+(p) &= \{x \in \mathbb{R}^2:
                x_2 - p_2 = x_1 - p_1 \}, \\
             gl^-(p) &= \{x \in \mathbb{R}^2:
                x_2 - p_2 = -(x_1 - p_1) \}.
            \end{align*}
            Given two points $p$ and $q$, we define the \textit{coordinate complements} to be
            \begin{align*}
                c^1(p, q) &= cl^1(p) \cap cl^2(q)
                    = (p_1, q_2), \\
                c^2(p, q) &= cl^2(p) \cap cl^1(q)
                    = (q_1, p_2),               
            \end{align*}
            and we define the \textit{guide complements} to be
            \begin{align*}
                g^+(p, q) &= gl^+(p) \cap gl^-(q), \\
                g^-(p, q) &= gl^-(p) \cap gl^+(q).
            \end{align*}
            See Figure~\ref{structurefig}(a) for representations of these lines and points, and \cite{BCFHMNSTV} for more detail about them.  Finally, we define the \textit{midpoint} of $p$ and $q$ as follows:
            \[
            m(p, q) = \left(\frac{p_1 + q_1}{2}, \frac{p_2 + q_2}{2} \right).
            \]
     
            \begin{figure}
                \begin{picture}(380,100)
                    
                    \put(-5,10){
                        \includegraphics[scale = .4, draft = false]{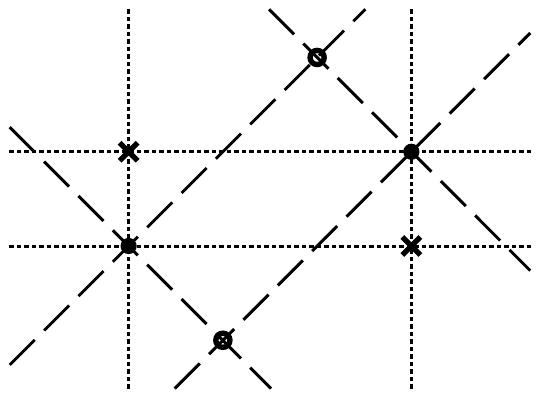}
                        \put(-24,55){$p$}
                        \put(-85,19){$q$}
                        \put(-24,19){$c^1$}
                        \put(-77,49){$c^2$}
                        \put(-64,17){$g^+$}
                        \put(-44,72){$g^-$}
                        \put(-60,-13){(a)}
                    }
                                        
                    \put(125,20){
                        \includegraphics[scale = .3, draft = false]{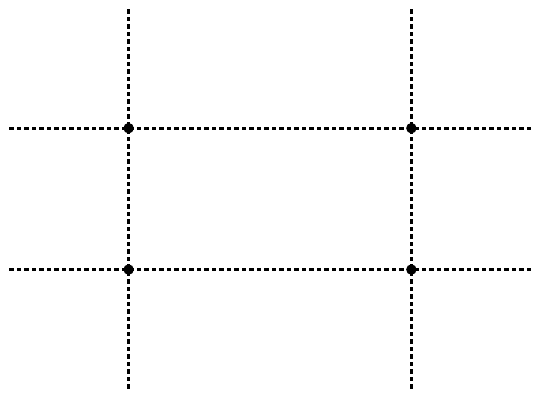}
                        \put(-58,13){\footnotesize{$q$}}
                        \put(-25,42){\footnotesize{$p$}}
                        \put(-58,41){\footnotesize{$c^2$}}
                        \put(-27,9){\footnotesize{$c^1$}}
                        \put(-77, 48){$Q_{c^2}$}
                        \put(-49, 48){$S_{p, c^2}$}
                        \put(-13, 48){$Q_p$}
                        \put(-80, 26){$S_{q, c^2}$}
                        \put(-44, 26){$R$}
                        \put(-16, 26){$S_{p, c^1}$}
                        \put(-77, 4){$Q_q$}
                        \put(-49, 4){$S_{q, c^1}$}
                        \put(-13, 4){$Q_{c^1}$}
                        \put(-45,-21){(b)}
                    }                    
                    
                    \put(253,10){
                        \includegraphics[scale = .4, draft = false]{regions.pdf}
                        \put(-78,18){\footnotesize{$q$}}
                        \put(-32,56){\footnotesize{$p$}}
                        \put(-128,67){\footnotesize{$\sigma_p\!=\!(\medminus1,1)$}}
                        \put(-128,57){\footnotesize{$\sigma_q\!=\!(\medminus1,1)$}}
                         \put(-76,67){\footnotesize{$\sigma_p\!=\!(\medminus1,1)$}}
                        \put(-76,57){\footnotesize{$\sigma_q\!=\!(1,1)$}}
                         \put(-23,67){\footnotesize{$\sigma_p\!=\!(1,1)$}}
                        \put(-23,57){\footnotesize{$\sigma_q\!=\!(1,1)$}}
                        \put(-128,41){\footnotesize{$\sigma_p\!=\!(\medminus1,\medminus1)$}}
                        \put(-128,31){\footnotesize{$\sigma_q\!=\!(\medminus1,1)$}}
                        \put(-74,41){\footnotesize{$\sigma_p\!=\!(\medminus1,\medminus1)$}}
                        \put(-74,31){\footnotesize{$\sigma_q\!=\!(1,1)$}}
                        \put(-23,41){\footnotesize{$\sigma_p\!=\!(1,\medminus1)$}}
                        \put(-23,31){\footnotesize{$\sigma_q\!=\!(1,1)$}}
                        \put(-128,14){\footnotesize{$\sigma_p\!=\!(\medminus1,\medminus1)$}}
                        \put(-128,4){\footnotesize{$\sigma_q\!=\!(\medminus1,\medminus1)$}}
                        \put(-72,14){\footnotesize{$\sigma_p\!=\!(\medminus1,\medminus1)$}}
                        \put(-72,4){\footnotesize{$\sigma_q\!=\!(1,\medminus 1)$}}
                         \put(-23,14){\footnotesize{$\sigma_p\!=\!(1,\medminus1)$}}
                        \put(-23,4){\footnotesize{$\sigma_q\!=\!(1,\medminus1)$}}
                        \put(-60,-13){(c)}
                    }
                \end{picture}
                    \caption{
                        (a) Coordinate lines, dotted; guide lines, dashed; coordinate complements and guide complements.  (b) The regions defined by $p$ and $q$.  (c) The values of $\sigma_p$ and $\sigma_q$ in each region.} \label{structurefig}
            \end{figure}
            
        \subsection{Regions}            
            Together, the two points, $p$ and $q$, their coordinate complements $c^1$ and $c^2$, and the coordinate lines through them divide the plane into nine regions.  These regions comprise four \textit{quadrants} $Q_p$, $Q_q$, $Q_{c^1}$, $Q_{c^2}$, four \textit{half-strips} $S_{p, c^1}$, $S_{p, c^2}$, $S_{q, c^1}$, $S_{q, c^2}$, and a \textit{central rectangle} $R$, also called the \textit{coordinate rectangle}.  These regions are defined to be including their boundaries so that two half-strips and the central rectangle collapse to rays and a segment if $p$ and $q$ share a coordinate line.  See Figure~\ref{structurefig}(b).

            Comparing to the notation used in \cite{BCFHMNSTV}, the quadrants are the regions $R_1$, $R_3$, $R_7$, $R_9$, the half-strips are the regions $R_2$, $R_4$, $R_6$, $R_8$, and the central rectangle is $R_5$.

            Let
            \begin{equation*}
                \sigma_{pj}(x) =
                    \begin{cases}
                        \frac{x_j-p_j}{|x_j - p_j|}
                            & \mbox{if} \ x_j \neq p_j \\
                        0 & \mbox{if} \ \ x_j = p_j
                    \end{cases}
            \end{equation*}
            and let $\sigma_p(x) = \Bigl(\sigma_{p1}(x), \sigma_{p2}(x)\Bigr)$.
            We can use these functions to characterize the various regions; see Figure~\ref{structurefig}(c).

            Note that, given two points $p$ and $q$, the central rectangle $R$ can also be characterized as the set of points $x$ with the property that $d(x, p) + d(x, q) = d(p, q)$.  In other words, $R$ is the set of points in the taxicab plane where the triangle inequality is actually an equality.

        \subsection{Relative proximity}
            Given points $p$ and $q$, Let
            \[
            E(p, q) = \{x \in \mathbb{R}^2:d(x, p) = d(x, q)\}
            \]
            and let
            \[
            H(p, q) = \{x \in \mathbb{R}^2: d(x, p) \leq d(x, q)\}.
            \]
            See Figure~\ref{e-and-h-setsfig}.  The sets $E(p, q)$ were explored in detail in \cite{Reynolds} and \cite{KAGO}.  More recently in \cite{BCFHMNSTV}, both $E(p, q)$ and $H(p, q)$ were described in terms of Apollonian sets.

            The following lemma, which is essentially a restatement of Lemma~3.14 from \cite{BCFHMNSTV},  will prove to be quite useful:
            \begin{lemma} \label{guideHlemma}
                Let $b$ and $c$ lie on distinct guide lines through $a$.  Then
                \[
                H(a, b) \cup H(a, c) = \mathbb{R}^2.
                \]
            \end{lemma}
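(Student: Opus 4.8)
The plan is to argue at the level of complements. Since $H(a,b) \cup H(a,c) = \mathbb{R}^2$ is equivalent to $\big(\mathbb{R}^2 \setminus H(a,b)\big) \cap \big(\mathbb{R}^2 \setminus H(a,c)\big) = \emptyset$, it suffices to show that no point $x$ is simultaneously strictly closer to $b$ than to $a$ and strictly closer to $c$ than to $a$; that is, the two strict inequalities $d(x,b) < d(x,a)$ and $d(x,c) < d(x,a)$ can never hold at the same time.

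The key step, and the one I expect to be the real obstacle, is to choose coordinates adapted to the guide lines so as to avoid an unpleasant sign-by-sign case analysis on $|x_1|$, $|x_2|$, $|x_1 - b_1|$, and so on. After translating so that $a$ is the origin, I would set $s(x) = x_1 + x_2$ and $\delta(x) = x_1 - x_2$. The elementary identity $|t_1| + |t_2| = \max(|t_1 + t_2|,\, |t_1 - t_2|)$ then gives $d(x,y) = \max\big(|s(x) - s(y)|,\ |\delta(x) - \delta(y)|\big)$, so that in the $(s,\delta)$ coordinates the taxicab metric becomes the sup-norm and the guide lines $gl^+(a)$ and $gl^-(a)$ become the coordinate axes $\delta = 0$ and $s = 0$. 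Recognizing this transformation is the whole difficulty; once it is in hand the computation collapses, whereas a direct attack in the original coordinates drowns in casework.

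With $b$ on one guide line through $a$ and $c$ on the other, I may assume $b \in gl^+(a)$ and $c \in gl^-(a)$, so that $\delta(b) = \delta(a) = 0$ and $s(c) = s(a) = 0$. Writing $s = s(x)$ and $\delta = \delta(x)$, this yields $d(x,a) = \max(|s|,\, |\delta|)$, together with $d(x,b) = \max(|s - s(b)|,\, |\delta|)$ and $d(x,c) = \max(|s|,\, |\delta - \delta(c)|)$.

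The finish is then immediate. Because $d(x,b) \geq |\delta|$, the strict inequality $d(x,b) < d(x,a) = \max(|s|,\, |\delta|)$ forces $\max(|s|,|\delta|) > |\delta|$, hence $|s| > |\delta|$. Symmetrically, because $d(x,c) \geq |s|$, the inequality $d(x,c) < d(x,a)$ forces $|\delta| > |s|$. These two conclusions contradict one another, so no such $x$ exists and the union is all of $\mathbb{R}^2$. The only point requiring a separate remark is the degenerate case $b = a$ (or $c = a$), in which $H(a,b) = \mathbb{R}^2$ already and the statement is trivial; otherwise the argument above applies verbatim.
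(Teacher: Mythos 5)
Your proof is correct, but note that the paper does not actually prove this lemma: it is introduced as ``essentially a restatement of Lemma~3.14 from \cite{BCFHMNSTV}'' and the argument is delegated entirely to that reference. So what you have written is not a variant of the paper's proof but a self-contained replacement for a citation. The route you take --- translating $a$ to the origin, passing to $s = x_1 + x_2$ and $\delta = x_1 - x_2$, and using the identity $|t_1| + |t_2| = \max(|t_1+t_2|,\,|t_1-t_2|)$ to convert the taxicab metric into the sup-norm with the two guide lines through $a$ becoming the coordinate axes --- is exactly the right way to see why guide lines are distinguished in this geometry, and it collapses the lemma to the observation that $d(x,b) \geq |\delta|$ and $d(x,c) \geq |s|$ cannot both be strictly less than $\max(|s|,|\delta|)$; the degenerate case $b=a$ or $c=a$ is correctly dispatched as trivial. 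What the citation buys the authors is brevity; what your argument buys is self-containedness and a conceptual explanation (the $\ell^1$--$\ell^\infty$ isometry) of why Lemma~\ref{guideHlemma} --- which the concluding remarks single out as the unifying engine behind all of these characterizations --- holds at all. It would be worth checking whether the proof in \cite{BCFHMNSTV} proceeds by regionwise case analysis; if so, your version is genuinely cleaner.
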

            
            \begin{figure}
                \begin{picture}(280,240)
                    \put(0,150){
                        \includegraphics[scale = .45, draft = false]{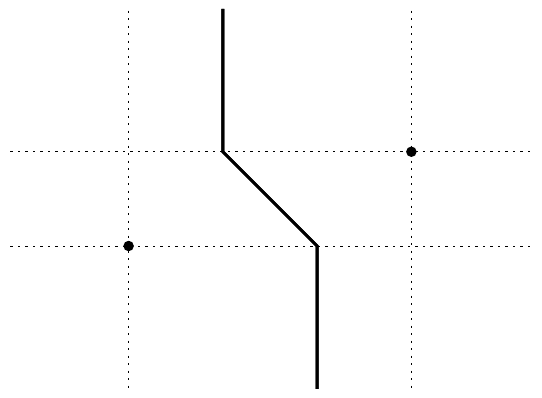}
                        \put(-87,26){$q$}
                        \put(-36,57){$p$}
                        \put(-66,-10){(a)}
                    }
                    \put(170,150){
                        \includegraphics[scale = .35, draft = false]{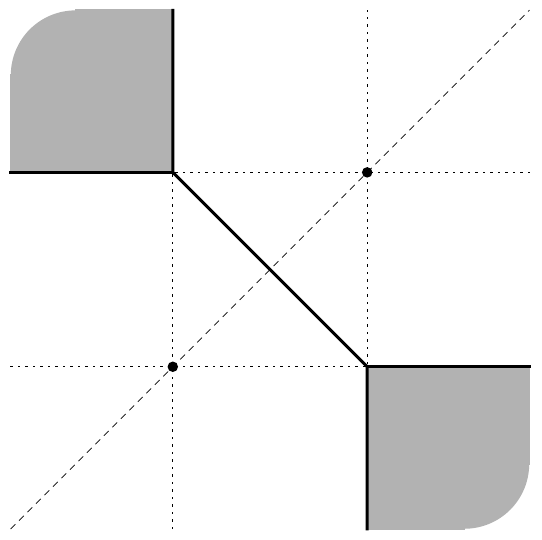}
                        \put(-60,23){$q$}
                        \put(-37,66){$p$}
                        \put(-53,-10){(b)}
                    }
                    \put(0,15){
                        \includegraphics[scale = .45, draft = false]{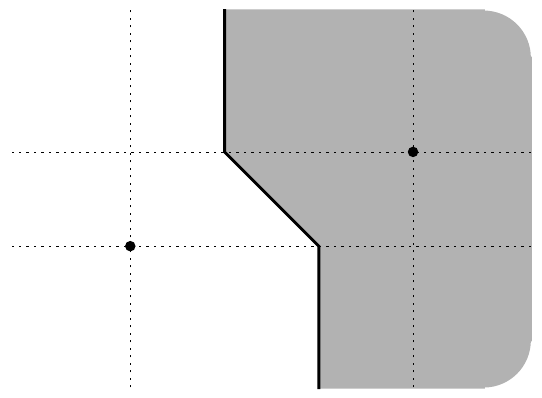}
                        \put(-87,26){$q$}
                        \put(-36,57){$p$}
                        \put(-66,-10){(c)}
                    }
                    \put(170,10){
                        \includegraphics[scale = .35, draft = false]{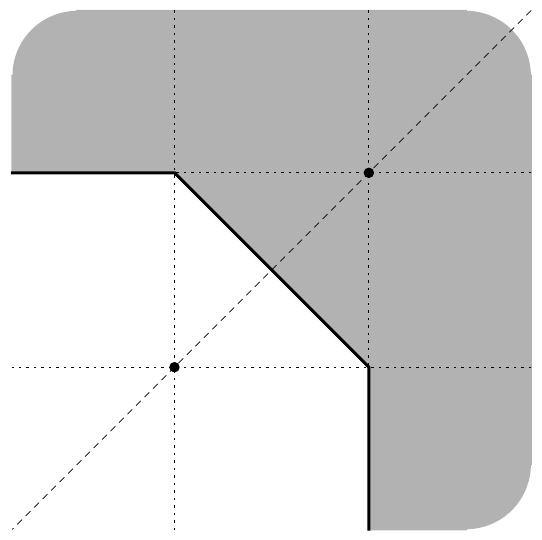}
                        \put(-60,23){$q$}
                        \put(-37,66){$p$}
                        \put(-53,-10){(d)}
                    }
                \end{picture}
                
                \caption{
                    The sets $E(p, q)$ when $p$ and $q$ do not share a guide line (a) and when $p$ and $q$ do share a guide line (b), and the sets $H(p, q)$ when $p$ and $q$ do not share a guide line (c) and when $p$ and $q$ do share a guide line (d).
                } \label{e-and-h-setsfig}
            \end{figure}

        \subsection{Isometries}
            As shown in \cite{Schattschneider}, The isometry group for the taxicab plane is isomorphic to $\mathbb{R}^2 \rtimes D_4$, with the $\mathbb{R}^2$ factor corresponding to translations and the $D_4$ factor corresponding to rotations about a point by multiples of $\frac{\pi}{2}$ and reflections across guide lines and coordinate lines through the point.

            We say two points $a$ and $b$ are in \textit{standard position} if $a$ is the origin and $b$ is in the first quadrant with $b_2 \leq b_1$.  We note that given any two points $a$ and $b$, there is a taxicab isometry $\varphi$ such that $\varphi(a)$ and $\varphi(b)$ are in standard position.
            
    
    \section{Geometric description of Cassini sets} \label{descriptionsec}
        In this section, we establish a geometric description of taxicab Cassini sets.  Cassini sets are isometry invariants in the sense that if $\varphi$ is an isometry, then
        \[
            \varphi\Bigl(K(p, q; r)\Bigr) = K\Bigl(\varphi(p), \varphi(q); r\Bigr).
        \]
        Additionally, they are scale invariants as long as the parameter $r$ is adjusted appropriately.  If $\delta$ is a dilation by a factor of $\lambda$ then
        \[
            \delta\Bigl(K(p, q; r)\Bigr) = K\Bigl(\delta(p), \delta(q); \lambda r\Bigr).
        \]
        With this in mind, we provide explicit algebraic formulas in the specific case where $m(p,q)$ and $p$ are in standard position.  This ensures that $q = -p$ which simplifies and clarifies the structure.
        
        Taxicab isometries preserve the geometric properties arising from the algebraic formulations, thus establishing a qualitative description which applies for $p$ and $q$ in general position.
        
        Portions of taxicab Cassini sets consist of hyperbolic segments.  To simplify the description, let $\mathcal{H}(p; r)$ be the pair of Euclidean hyperbolas centered at $p$, having asymptotes equal to the guide lines through $p$, and with vertices at the points $p \pm (r, 0)$ and $p \pm (0, r)$.

        \begin{main} \label{descriptionthm}
            Let $K = K(p, q; r)$ be the taxicab Cassini set defined by $p, q \in (\mathbb{R}^2, d)$, and $r \geq 0$.  Then
            \begin{itemize}
                \item for each quadrant $Q$ defined by $p$ and $q$, $K \cap Q$ is either empty or a segment on a guide line ending at points on the coordinate lines defining $Q$;
                \item in the central rectangle $R$, $K \cap R$ is either empty, a single segment, or a pair of parallel segments, which always lie on guide lines parallel to $E(p, q) \cap R$;
                \item for each half-strip $S$ defined by $p$ and $q$, $K \cap S = \mathcal{H}(g, r) \cap S$ where $g$ is the guide complement of $p$ and $q$ with the property that both of the guide lines defining it avoid the interior of $S$.
            \end{itemize}
            These sets coincide on the shared coordinate lines between two regions and together, they form two simple closed curves if $r < r^* = \frac{1}{2} d(p, q)$ and a single simple closed curve if $r > r^*$.  If $r = r^*$, $K$ is homeomorphic to two squares sharing an edge if $p$ and $q$ are not on the same coordinate line, or two squares sharing only a vertex if $p$ and $q$ are on a coordinate line.
            
            If $m(p, q)$ and $p$ are in standard position, then $K$ is defined by the following equations:
            \begin{itemize}
            \item In the interior of each quadrant $Q$
            \begin{equation} \label{Qeqn}
            \sigma_{p1}\, x_1 + \sigma_{p2}\, x_2
                = \sqrt{(\sigma_{p1}\, p_1
                            + \sigma_{p2}\, p_2)^2 + r^2}.
            \end{equation}
            \item In the interior of the central rectangle $R$
            \begin{equation} \label{Reqn}
            x_1 + x_2
                = \pm \sqrt{(p_1 + p_2)^2
                        -r^2}.
            \end{equation}
            \item In the interior of each half-strip $S$
            \begin{equation} \label{Seqn}
            (x_1 - \sigma_{p1} \sigma_{p2}\, p_2)^2
                - (x_2 - \sigma_{p1} \sigma_{p2}\, p_1)^2
                    = \sigma_{p1} \sigma_{q1}\, r^2.
            \end{equation}
            \end{itemize}
        \end{main}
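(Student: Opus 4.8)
The plan is to reduce to the standard-position case, where the isometry and scale invariance noted above allow me to assume $q = -p$ with $p_1 \geq p_2 \geq 0$, and then to work region by region. The engine of the whole argument is the observation recorded in Figure~\ref{structurefig}(c): on the interior of each of the nine regions the sign vector $\sigma_p$ is constant, and likewise $\sigma_q$, so that there $d(x,p) = \sigma_{p1}(x_1 - p_1) + \sigma_{p2}(x_2 - p_2)$ and $d(x,q) = \sigma_{q1}(x_1 + p_1) + \sigma_{q2}(x_2 + p_2)$ are affine functions. Hence the defining function $f(x) = d(x,p)\,d(x,q)$ restricts to a product of two affine forms, that is, a single quadratic, on each region, and $K$ is its $r^2$-level set there. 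Each of the three displayed equations should then drop out of a short computation.

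For the quadrants and the central rectangle: in any quadrant $p$ and $q$ lie on the same side of both coordinate lines, so $\sigma_q = \sigma_p$, and setting $u = \sigma_{p1} x_1 + \sigma_{p2} x_2$ and $v = \sigma_{p1} p_1 + \sigma_{p2} p_2$ gives $d(x,p) = u - v$ and $d(x,q) = u + v$, so $f = u^2 - v^2$ and the level set is $u = \pm\sqrt{v^2 + r^2}$. The sign of $u$ is pinned down by the quadrant, for example $u > 0$ on $Q_p$, yielding \eqref{Qeqn}; since $u$ is constant exactly on guide lines, this piece is a guide-line segment cut off by the bounding coordinate lines. In $R$ one has $\sigma_p = (-1,-1)$ and $\sigma_q = (1,1)$, so with $w = x_1 + x_2$ and $s = p_1 + p_2$ one gets $d(x,p) = s - w$ and $d(x,q) = s + w$, hence $w^2 = s^2 - r^2$, which is \eqref{Reqn}; these are guide-line segments parallel to $E(p,q) \cap R = \{w = 0\}$, appearing as a pair when $r < s$, as the single segment $w = 0$ when $r = s$, and absent when $r > s$. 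Since $d(p,q) = 2s$ in standard position, $s = r^*$, which already locates the transition.

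For the half-strips, $\sigma_p$ and $\sigma_q$ agree in exactly one coordinate and disagree in the other, and the key algebraic point is that $f$ then factors as a difference of two squares. Grouping the affine forms appropriately, one writes $d(x,p) = P - Q$ and $d(x,q) = P + Q$ (up to an overall swap), with $P = x_1 - \sigma_{p1}\sigma_{p2} p_2$ and $Q = x_2 - \sigma_{p1}\sigma_{p2} p_1$, so $f = P^2 - Q^2$ and one arrives at \eqref{Seqn}. This is a Euclidean hyperbola centered at $(\sigma_{p1}\sigma_{p2} p_2,\ \sigma_{p1}\sigma_{p2} p_1)$, with asymptotes the guide lines through that center and vertices at distance $r$ from it; a direct check identifies the center with $g^+$ or $g^-$ according to the sign bookkeeping, hence with the branch of $\mathcal{H}(g;r)$ meeting $S$. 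I would then confirm, using the explicit coordinates of $g^\pm$, that the guide lines defining this $g$ are exactly the two that miss the interior of $S$, matching the stated description, and observe that opposite half-strips share a center but select complementary branches of the same $\mathcal{H}(g;r)$.

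Finally, for the gluing, the topology, and the return to general position: matching of the pieces along shared coordinate lines is immediate from continuity of $f$, though I would record the one-line check on, say, $cl^1(p)$ to confirm the square-root branches agree. For the global shape, the cleanest route is to read the topology off of $f$ directly, noting that $f$ is continuous, vanishes only at $p$ and $q$, and is constant equal to $(r^*)^2$ along the whole segment $E(p,q) \cap R$, which is precisely the locus where $d(x,p) = d(x,q) = r^*$. Consequently the sublevel sets grow as two disjoint topological disks around $p$ and $q$ that first touch, along this segment, exactly at level $(r^*)^2$, giving two simple closed curves for $r < r^*$, one for $r > r^*$, and at $r = r^*$ the two loops joined along $E(p,q)\cap R$, an edge in general and a single point when $p$ and $q$ share a coordinate line (where $R$ collapses). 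Since $K$ is an isometry invariant, every qualitative claim transfers verbatim to general position. I expect the per-region computations to be routine; the genuine obstacle is this last step, namely verifying rigorously that the arcs close up into exactly the claimed number of simple closed curves, pinning down the homeomorphism type at $r = r^*$, and correctly treating the degenerate configurations ($r = 0$, $d(p,q) = 0$, and $p, q$ sharing a coordinate line) where regions collapse.
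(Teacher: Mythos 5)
Your proposal is correct and follows essentially the same route as the paper: reduce to standard position with $q=-p$, resolve the absolute values region by region via the sign vectors $\sigma_p,\sigma_q$, and obtain each displayed equation as a difference of squares (the paper packages this as one master quadratic identity before specializing, but the algebra is identical), with the topological assembly then read off from the per-region formulas. The gluing and closed-curve count is treated at about the same level of detail in the paper's own proof as in your sketch, so no substantive gap separates the two.
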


        See Figure~\ref{hyperbolafig} for examples of the behavior in each region.
        
        \begin{figure}
            \begin{picture}(280,210)
                \put(0,110){
                    \includegraphics[scale = .5, clip = true, draft = false]{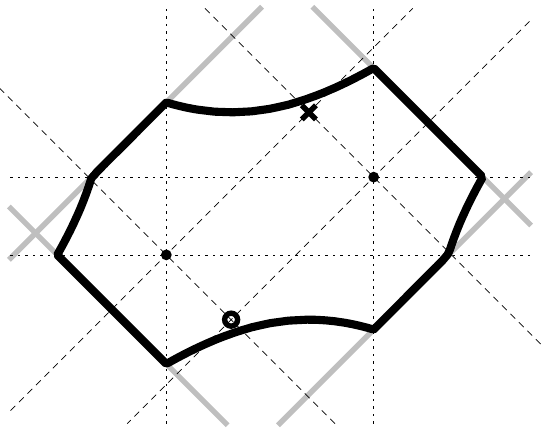}
                    \put(-100,42){\small{$q$}}
                    \put(-36,61){\small{$p$}}
                    \put(-59,69){\small{$g^-$}}
                    \put(-80,32){\small{$g^+$}}
                }
                \put(160,115){
                    \includegraphics[scale = .45, clip = true, draft = false]{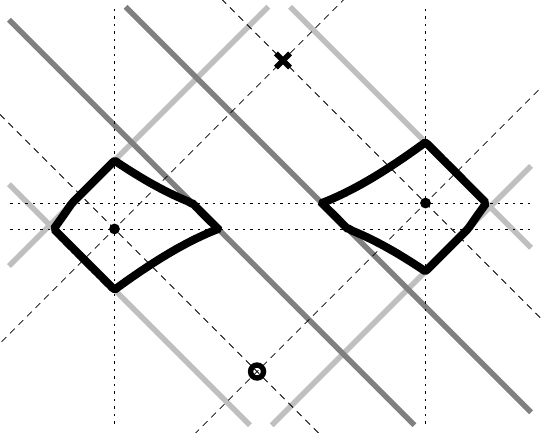}
                    \put(-89,44){\small{$q$}}
                    \put(-34,48){\small{$p$}}
                    \put(-60,72){\small{$g^-$}}
                    \put(-64,20){\small{$g^+$}}
                }
                \put(0,0){
                    \includegraphics[scale = .5, clip = true, draft = false]{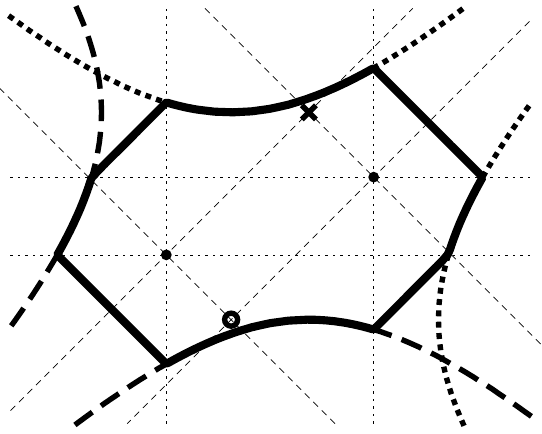}
                    \put(-100,42){\small{$q$}}
                    \put(-36,61){\small{$p$}}
                    \put(-59,69){\small{$g^-$}}
                    \put(-80,32){\small{$g^+$}}
                }    
                \put(160,5){
                    \includegraphics[scale = .45, clip = true, draft = false]{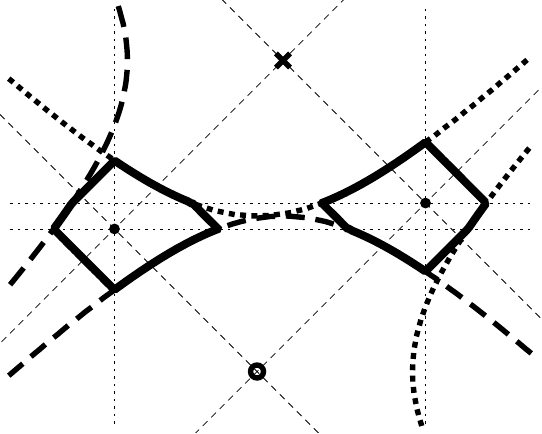}
                    \put(-89,44){\small{$q$}}
                    \put(-34,48){\small{$p$}}
                    \put(-60,72){\small{$g^-$}}
                    \put(-64,20){\small{$g^+$}}
                }
            \end{picture}
            
            \caption{
                The Cassini set $K(p, q; r)$ intersects the quadrants and central rectangle along guide lines (top row) and intersects half-strips in hyperbolas (bottom row).  Dashed curves are centered at $g^-$ and dotted curves are centered at $g^+$.  On the left, $p = (8, 3)$, $q = -p$, and $r = 16$.  On the right, $p = (12, 1)$, $q = -p$, and $r = 12$.
            } \label{hyperbolafig}
        \end{figure}

        \begin{proof}
            The geometric descriptions of $K$ follow from the algebraic formulas, which are established by resolving the absolute values in the definition of the taxicab Cassini set $K(p,q; r)$ according to the region in which $x$ lies.  With $m(p, q)$ and $p$ in standard position so that $q = -p$, the definition simplifies to
            \begin{equation} \label{newstructureeqn}
            \begin{split}
                \sigma_{p1} \sigma_{q1} &(x_1^2 - p_1^2)
                    + \sigma_{p2} \sigma_{q2}(x_2^2 - p_2^2) \\
                    & + \sigma_{p1} \sigma_{q2}
                    (x_1 x_2 - p_1 x_2 + p_2 x_1 - p_1 p_2) \\
                    & + \sigma_{p2} \sigma_{q1}
                        (x_1 x_2 + p_1 x_2 - p_2 x_1 - p_1 p_2)
                            = r^2.
            \end{split}
            \end{equation}
            In the interior of any quadrant $Q$, $\sigma_q = \sigma_p$, so Equation~\eqref{newstructureeqn} reduces to
            \[
                (\sigma_{p1}\, x_1 + \sigma_{p2}\, x_2)^2
                    = (\sigma_{p1}\, p_1
                        + \sigma_{p2}\, p_2)^2 + r^2.
            \]
            Taking the square root and noting that the resulting left hand side is always positive results in Equation~\eqref{Qeqn}, which is the equation for a guide line that intersects the coordinate lines defining the quadrant.  Note that this line is always defined, and always contributes to $K$ in $Q_p$ and $Q_q$, but does not always intersect $Q_{c^1}$ and $Q_{c^2}$.  In these quadrants, it only contributes to $K$ if $r \geq \sqrt{|p_1-q_1|\cdot |p_2 - q_2|}$.

            In the interior of the rectangle $R$, $\sigma_p = (-1, -1)$ and $\sigma_q = -\sigma_p$, so Equation~\eqref{newstructureeqn} reduces to
            \[
                (x_1 + x_2)^2 = (p_1 + p_2)^2 - r^2.
            \]
            Taking the square root results in Equation~\eqref{Reqn}.  In this case, both the positive and negative square root are possible.  Note that $p_1 + p_2 = \frac{1}{2} d(p, q)$, so if $r < r^*$, there are two lines.  When $r = r^*$, there is one solution passing through $m(p, q)$.  When $r > r^*$ there are no solutions.

            In the interiors of the half-strips $S$, among $\sigma_{p 1}$, $\sigma_{p 2}$, $\sigma_{q 1}$, and $\sigma_{q 2}$, exactly three are equal to each other.  This implies that the product of any two will always be the negative of the product of the other two.  This in turn implies that Equation~\eqref{newstructureeqn} reduces to Equation~\eqref{Seqn}, which is that of hyperbolas centered at $g^+$ or $g^-$ and having guide lines as asymptotes.  In the two half-strips $S_{p, c^1}$ and $S_{p, c^2}$, $\sigma_{p1} \sigma_{p2} = -1$, which implies that the center of the resulting hyperbolas is at the point $(-p_2, -p_1) = g^+$ both of whose guide lines avoid the interior of $S_{p, c^1}$ and $S_{p, c^2}$.  Similarly, for the two half-strips touching $q$, the center is $g^-$.  For a given guide complement $g$, the product $\sigma_{p1} \sigma_{q1}$ specifies which of the two hyperbolas of $\mathcal{H}(g, r)$ is represented by Equation~\eqref{Seqn}, and verifies that for a given half-strip $S$, $K \cap S = \mathcal{H} \cap S$.
        \end{proof}    
            
    
    \section{Characterization of Cassini sets} \label{characterizationsec}
        In this section we characterize a Cassini set $K(p, q; r)$ in terms of a family of guide Cassini sets determined by $p$, $q$, and $r$.  To do this, we introduce the filled Cassini set
        \begin{equation*}
            L(p,q;r) = \left\{x\in \mathbb{R}^2 : d(x,p) \cdot d(x,q) < r^{2}\right\}.
        \end{equation*}
        We start with a technical lemma establishing that a Cassini set is the boundary of the corresponding filled Cassini set, and the strict inequality in the definition of filled Cassini sets is chosen to simplify this analysis.

        \begin{lemma} \label{Kispdyllemma}
            Let $p, q \in (\mathbb{R}^2, d)$, and let $r > 0$.  Then
            $K(p,q;r) = \partial L(p,q;r)$.
        \end{lemma}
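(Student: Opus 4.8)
The plan is to show the two set inclusions $K \subseteq \partial L$ and $\partial L \subseteq K$, working with the continuous function $f(x) = d(x,p)\cdot d(x,q)$. Since $d(\cdot, p)$ and $d(\cdot, q)$ are each continuous, so is $f$, and we have $K = f^{-1}(r^2)$ while $L = f^{-1}\bigl([0, r^2)\bigr) = f^{-1}\bigl((-\infty, r^2)\bigr)$. The set $L$ is therefore open, so $\partial L = \overline{L} \setminus L$, and it suffices to understand which points of $K$ lie in the closure of $L$ and to rule out boundary points lying off $K$.

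First I would handle $\partial L \subseteq K$. If $x \in \partial L$, then $x \in \overline{L}$, so there is a sequence $x_n \to x$ with $f(x_n) < r^2$; continuity gives $f(x) \leq r^2$. On the other hand $x \notin L$ since $L$ is open and $x$ is a boundary point, so $f(x) \geq r^2$. Hence $f(x) = r^2$, i.e. $x \in K$. This direction is clean and uses only continuity of $f$ together with openness of $L$.

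The reverse inclusion $K \subseteq \partial L$ is the substantive part and is where I expect the main obstacle. Given $x \in K$, so $f(x) = r^2$, I must produce points arbitrarily close to $x$ in $L$ (giving $x \in \overline{L}$), while $x \notin L$ is immediate since $f(x) = r^2 \not< r^2$. The natural idea is to move $x$ slightly toward one of the foci: if I decrease $d(x,p)$ while controlling $d(x,q)$, the product should drop below $r^2$. Concretely, one expects that moving along a taxicab geodesic from $x$ toward $p$ strictly decreases $d(x,p)$ and does not increase $d(x,q)$ by more than a controlled amount, so for a small enough step the product is strictly less than $r^2$. The delicate case is when $x$ coincides with a focus, say $x = p$; then $d(x,p) = 0$ forces $r^2 = 0$, which is excluded by the hypothesis $r > 0$, so at least one factor is positive and the perturbation argument applies.

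The hard part will be making the perturbation rigorous in the taxicab setting, because the distance functions are only piecewise linear and a naive straight-line motion toward $p$ could increase $d(\cdot, q)$ in a way that is not obviously dominated. To control this, I would choose the displacement direction so that it strictly decreases one factor while keeping the other factor continuous and bounded; since the decreasing factor is strictly positive at $x$ (as $r > 0$ guarantees both factors cannot vanish), the product $f$ is strictly decreasing along that short segment, placing nearby points in $L$. An efficient alternative, which sidesteps the piecewise-linear casework, is to invoke scale invariance: dilating about whichever focus has positive distance by a factor $\lambda$ slightly less than $1$ sends $x$ to a point $x_\lambda$ with $d(x_\lambda, \cdot)$ comparable to $\lambda\, d(x, \cdot)$ along the relevant factor, driving $f(x_\lambda)$ strictly below $r^2$; letting $\lambda \to 1^-$ yields a sequence in $L$ converging to $x$. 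Either route establishes $x \in \overline{L} \setminus L = \partial L$, completing the proof.
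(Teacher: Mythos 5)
Your first direction ($\partial L \subseteq K$) is correct and is essentially the paper's argument: $\{f < r^2\}$ and $\{f > r^2\}$ are open, so a boundary point of $L$ must satisfy $f(x) = r^2$. For the converse the paper takes a different route from yours: it checks that $f$ has a nonzero partial (or one-sided directional) derivative at every point of $K$ except on $E(p,q) \cap R$, and then handles that exceptional set separately by noting that on the central rectangle $R$ the triangle inequality is an equality, so AM--GM gives $f \leq (r^*)^2$ on $R$ with equality exactly on $E(p,q) \cap R$; hence points of $L(p,q;r^*)$ accumulate at every point of that segment. Your perturbation idea can be made to work and, once fixed, is arguably cleaner and avoids all of that casework --- but as written it has a genuine gap.

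The gap is the choice of which focus to move toward. You say to move along a geodesic toward $p$, or alternatively to dilate about ``whichever focus has positive distance'' (both have positive distance when $r>0$, so this selects nothing). Neither recipe works in general: take $p=(0,0)$, $q=(10,0)$, $x=(8,0)$, so $d(x,p)=8$, $d(x,q)=2$, $f(x)=16$. Moving $x$ toward $p$ by $\epsilon$ gives distances $8-\epsilon$ and $2+\epsilon$ and product $16+6\epsilon-\epsilon^2 > 16$; the product \emph{increases}. Your fallback criterion (``keep the other factor continuous and bounded'') does not repair this, since continuity and boundedness of the second factor are compatible with the product growing. The correct selection is to move toward the \emph{closer} focus: if $d(x,p)\leq d(x,q)$ and $y$ lies on a geodesic from $x$ to $p$ with $d(x,y)=\epsilon < d(x,p)$, then $d(y,p)=d(x,p)-\epsilon$ and $d(y,q)\leq d(x,q)+\epsilon$, so
\[
f(y) \leq \bigl(d(x,p)-\epsilon\bigr)\bigl(d(x,q)+\epsilon\bigr) = r^2 + \epsilon\bigl(d(x,p)-d(x,q)\bigr) - \epsilon^2 \leq r^2 - \epsilon^2 < r^2 .
\]
The $-\epsilon^2$ term is what saves the equidistant case $d(x,p)=d(x,q)$, which is exactly the set $E(p,q)$ (including $E(p,q)\cap R$) that forces the paper's separate AM--GM argument. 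With that one correction your proof closes, needs only the triangle inequality and $r>0$ (to guarantee $\epsilon < d(x,p)$ is achievable), and requires no discussion of differentiability at all.
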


        Note that if $r = 0$ then $K(p, q) = \{p, q\}$ and $L(p, q)$ is empty.

        \begin{proof}
            For a given pair of points $p$ and $q$, define $f:\mathbb{R}^2 \rightarrow \mathbb{R}$, $f(x) = d(x, p) \cdot d(x, q)$.  Note that the level sets of $f$ are the sets $K(p, q; r)$.  Moreover, $f$ is continuous, so $L(p, q; r) = \{x \in \mathbb{R}^2:f(x) < r^2\}$ and $\{x \in \mathbb{R}^2:f(x) > r^2\}$ are both open.  Hence, if $x \in \partial L(p, q; r)$, then $f(x)$ cannot be less than $r^2$ nor greater than $r^2$.  Therefore $f(x) = r^2$, so $x \in K(p, q; r)$ and so $\partial L(p, q; r) \subseteq K(p, q; r)$.

            The other direction is a bit more subtle.  For a given $x \in K(p, q; r)$, if there is a partial derivative of $f$ that is non-zero at that point, then $x$ must also lie in $\partial L(p, q; r)$.  This condition is met everywhere except at $p$, $q$, and $E(p, q) \cap R$.

            Specifically, if $x$ does not lie in a coordinate line through $p$ or $q$ then $f$ is smooth and the gradiant is nonzero as long as $x \notin E(p, q) \cap R$.
            On a coordinate line through $p$ or $q$, $f$ is not smooth and the gradient cannot be computed, but the partial derivative in the direction of the coordinate line is still well defined and nonzero for all $x$ other than $p$ and $q$, which correspond to $r = 0$ and are outside the scope of the lemma, and the two points in $E(p, q) \cap \partial R$.
                        
            For points in $E(p, q) \cap R$ the gradient is zero and the analysis above does not apply.  By Theorem~\ref{descriptionthm}, this set is a line segment corresponding to the critical parameter $r^* = \frac{1}{2} d(p, q)$ and $K(p, q; r^*) \cap R = E(p, q) \cap R$.
            For $x \in R$, the triangle inequality becomes an equality, so the AMGM inequality yields
            $r^* \geq \sqrt{d(x, p) \cdot d(x, q)}$.
            Hence if $x \in R$, $f(x) \leq (r^*)^2$, and  therefore if $x$ is an element of $E(p, q) \cap R$, any neighborhood of that point includes points in $L(p, q; r^*)$.
        \end{proof}

        Because of Lemma~\ref{Kispdyllemma}, characterizing filled Cassini sets also characterizes the corresponding Cassini sets.  With this in mind, next we prove Theorems~\ref{unionofintersectionsthm} and \ref{intersectionofunionsthm} which state that any filled Cassini set can be thought of as both a  union of intersections of a family of filled guide Cassini sets, or an intersection of unions of those same sets.  See
        Figure~\ref{guidecassinifig} for examples of such sets, and see Figure~\ref{unionsandintersectionsfig} for an illustration of the two construction processes.

        \begin{figure}
            \begin{picture}(130,130)
                \put(0,0){
                    \includegraphics[scale = .5, clip = true, draft = false]{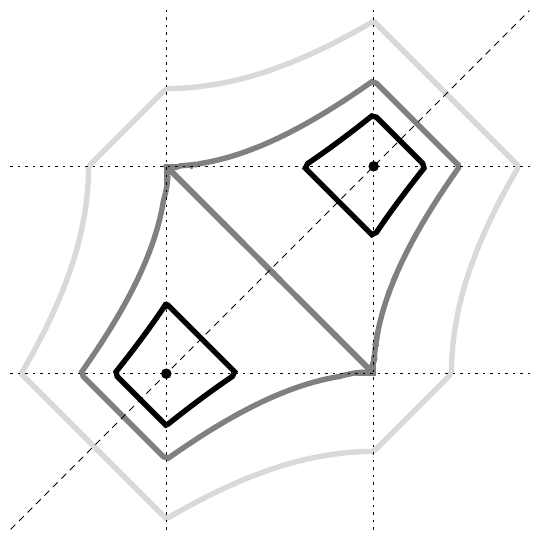}
                }
                \put(38,44){\small{$q$}}
                \put(95,85){\small{$p$}}
            \end{picture}
        
            \caption{
                Guide Cassini sets with $r < r^*$ (black), $r = r^*$ (dark gray), and $r > r^*$ (light gray).
            } \label{guidecassinifig}
        \end{figure}

          \begin{figure}
            \begin{picture}(280,360)
                \put(0,200){
                    \includegraphics[scale = .25, clip = true, draft = false]{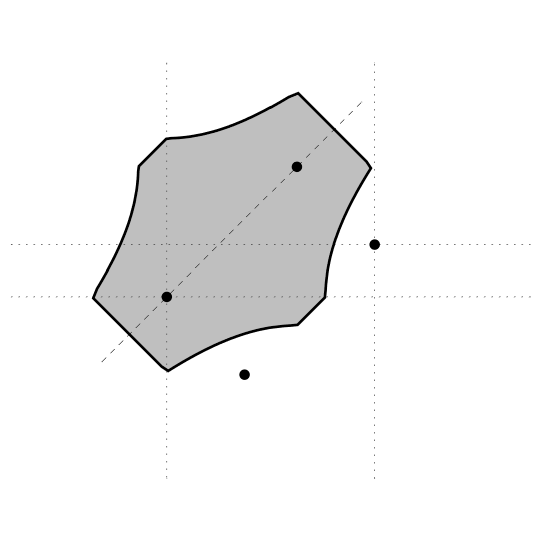}
                }    
                \put(70,200){
                    \includegraphics[scale = .25, clip = true, draft = false]{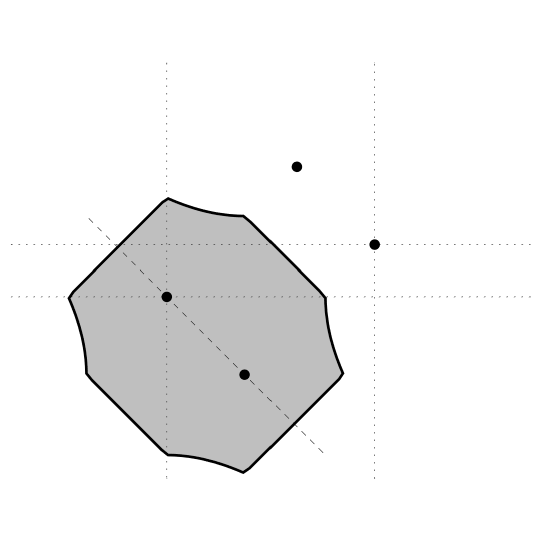}
                }
                \put(140,200){
                    \includegraphics[scale = .25, clip = true, draft = false]{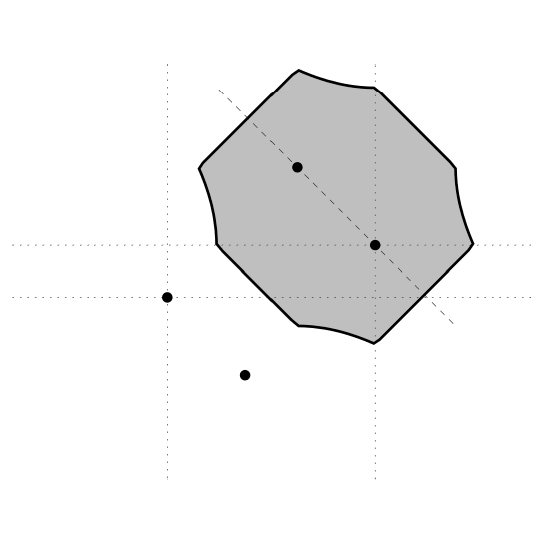}
                }    
                \put(210,200){
                    \includegraphics[scale = .25, clip = true, draft = false]{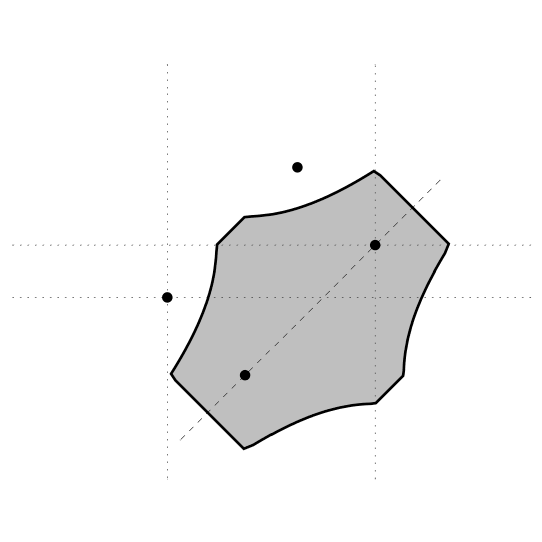}
                }
                \put(0,130){
                    \includegraphics[scale = .25, clip = true, draft = false]{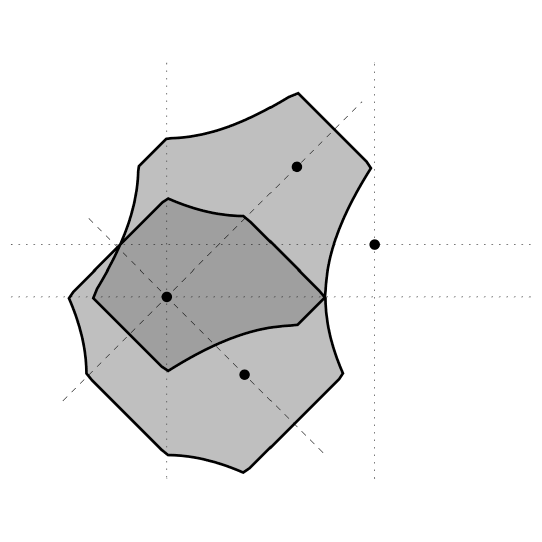}
                }    
                \put(70,130){
                    \includegraphics[scale = .25, clip = true, draft = false]{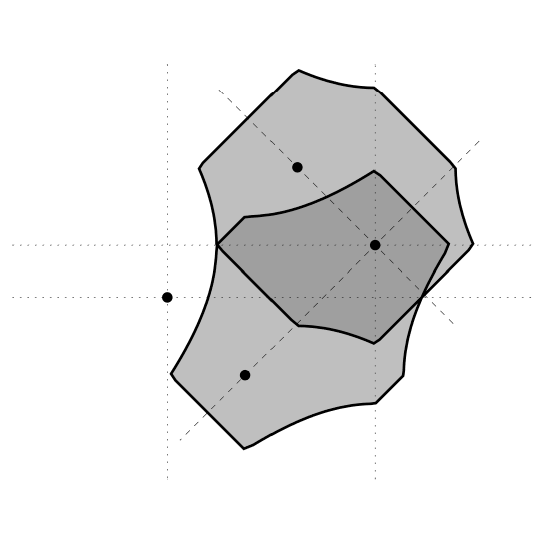}
                }
                \put(140,130){
                    \includegraphics[scale = .25, clip = true, draft = false]{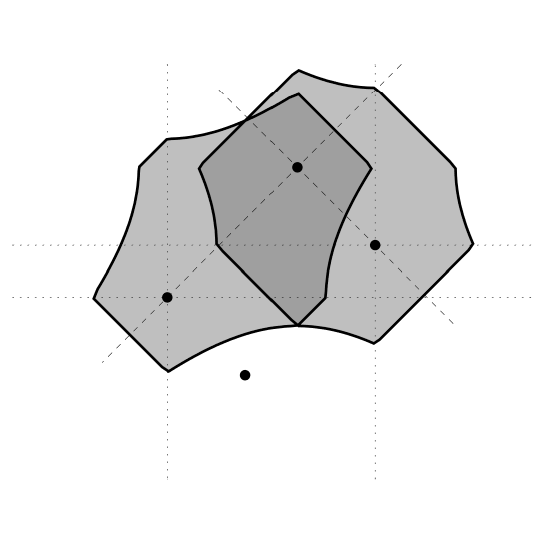}
                }    
                \put(210,130){
                    \includegraphics[scale = .25, clip = true, draft = false]{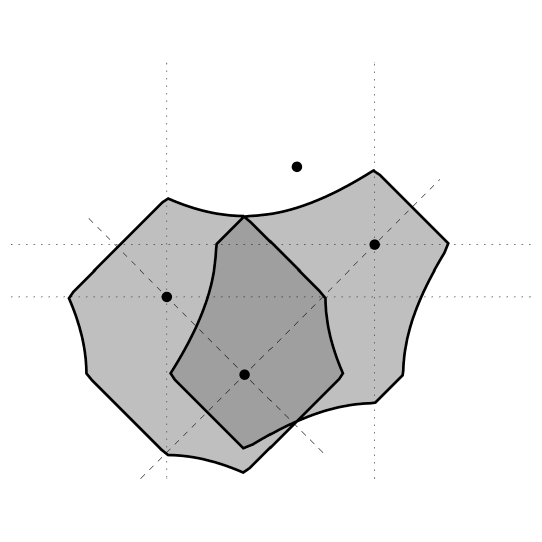}
                }
                \put(140,60){
                    \includegraphics[scale = .25, clip = true, draft = false]{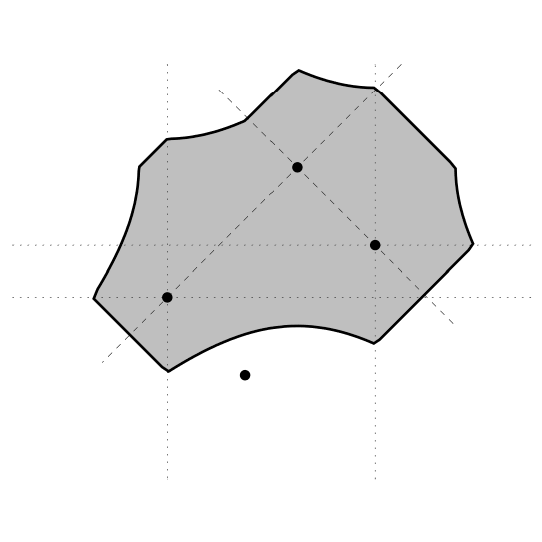}
                }    
                \put(210,60){
                    \includegraphics[scale = .25, clip = true, draft = false]{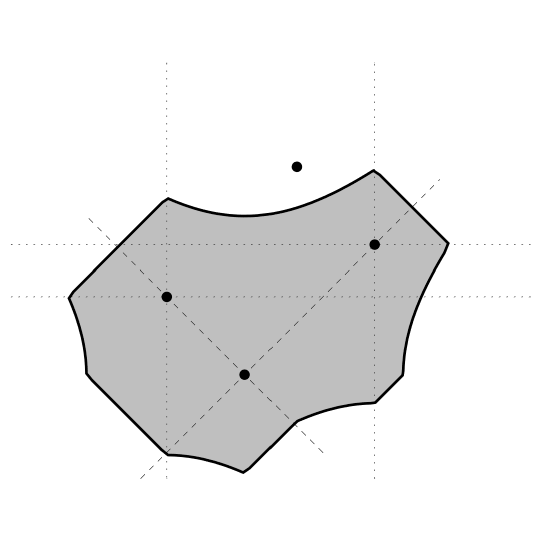}
                }
                \put(0,60){
                    \includegraphics[scale = .25, clip = true, draft = false]{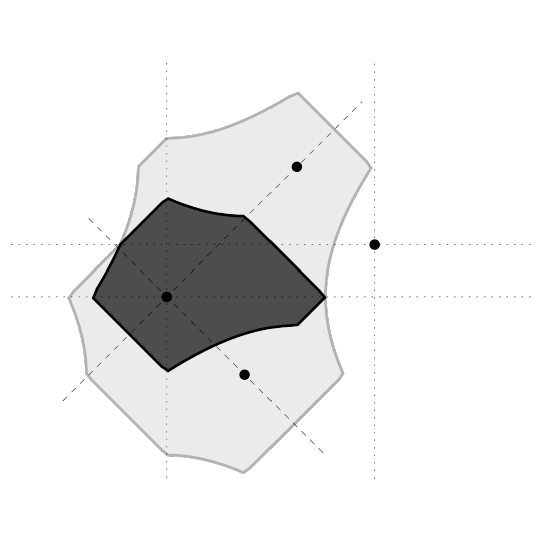}
                }    
                \put(70,60){
                    \includegraphics[scale = .25, clip = true, draft = false]{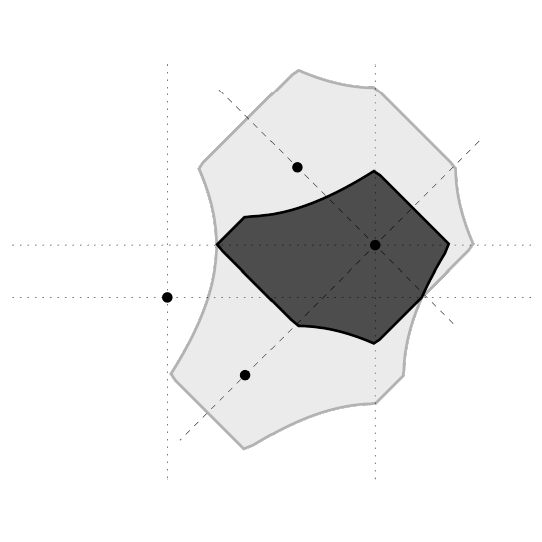}
                }
                \put(35,0){
                    \includegraphics[scale = .25, clip = true, draft = false]{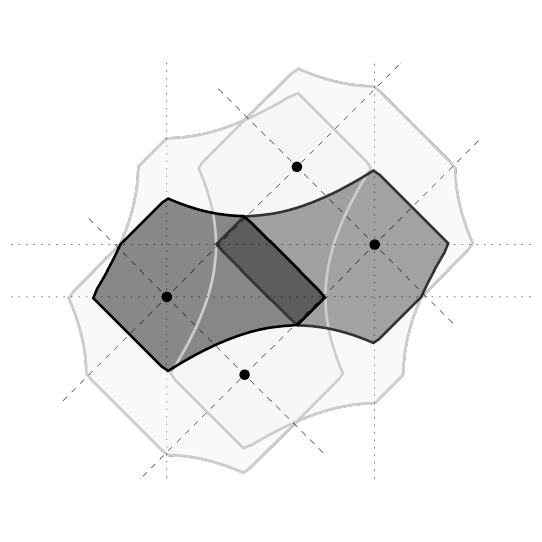}
                }
                \put(175,0){
                    \includegraphics[scale = .25, clip = true, draft = false]{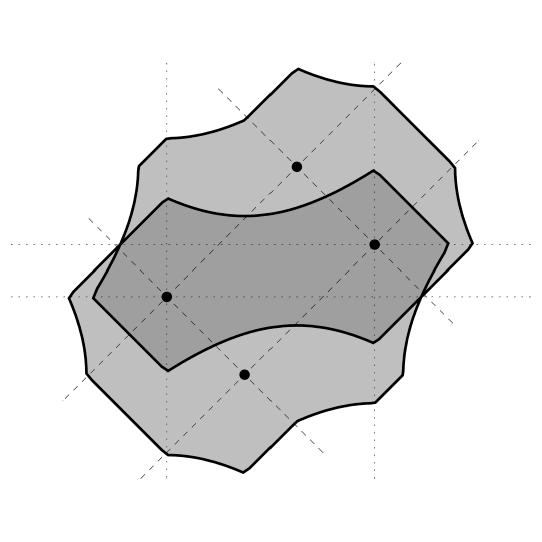}
                }
                \put(75,250){
                    \includegraphics[scale = .45, clip = true, draft = false]{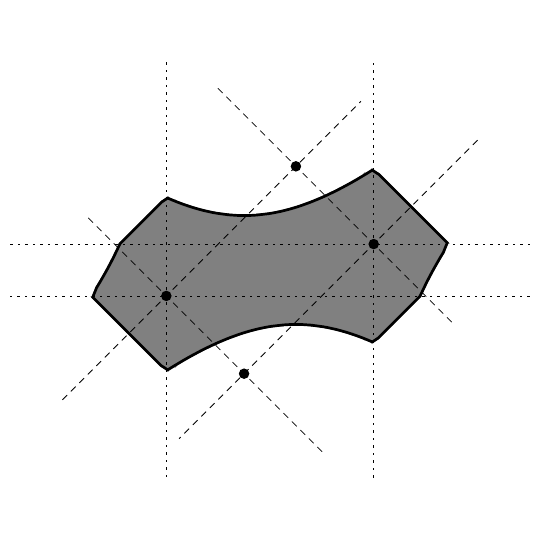}
                    \put(-34,68){\small{$p$}}
                    \put(-88,47){\small{$q$}}
                    \put(-55,87){\small{$g^-$}}
                    \put(-66,29){\small{$g^+$}}
                }
                \put(140,15){\line(0,1){180}}
            \end{picture}
            
            \caption{
                Characterizing a filled Cassini set in terms of speciffic filled Cassini sets.  To produce the filled Cassini set at the top, begin with the four sets in the second row.  Then a union of intersections is shown on the left (Theorem~\ref{unionofintersectionsthm}) and an intersection of unions is shown on the right (Theorem~\ref{intersectionofunionsthm}).  For these sets, $p = (4, 1)$, $q = -p$, and $r = 6$.
            } \label{unionsandintersectionsfig}
        \end{figure}

        \begin{main} \label{unionofintersectionsthm}
            Let $p, q \in (\mathbb{R}^2, d)$, let $r \geq 0$, and let $g^+$ and $g^-$ be the guide complements of $p$ and $q$.  Then
            \begin{equation} \label{unionofintersectionseq}
                L(p,q;r) = \Bigl[L(p,g^{+};r) \cap L(p,g^{-};r)\Bigr] \cup \Bigl[L(q,g^{+};r) \cap L(q,g^{-};r)\Bigr].
            \end{equation}
        \end{main}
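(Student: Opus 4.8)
The plan is to reduce the set identity \eqref{unionofintersectionseq} to a single pointwise identity among taxicab distances. Writing out membership in the right-hand side, a point $x$ lies in $L(p,g^+;r)\cap L(p,g^-;r)$ exactly when $d(x,p)\,d(x,g^+)<r^2$ and $d(x,p)\,d(x,g^-)<r^2$; since $d(x,p)\ge 0$, this is equivalent to $d(x,p)\,M(x)<r^2$, where $M(x)=\max\bigl(d(x,g^+),d(x,g^-)\bigr)$. The analogous statement holds for the second bracket with $q$ in place of $p$. Taking the union and using that $\min(a,b)<c$ if and only if $a<c$ or $b<c$, the right-hand side of \eqref{unionofintersectionseq} is precisely the set of $x$ with $\min\bigl(d(x,p),d(x,q)\bigr)\,M(x)<r^2$. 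On the other hand, $x\in L(p,q;r)$ means $d(x,p)\,d(x,q)<r^2$, and factoring the product as $\min\bigl(d(x,p),d(x,q)\bigr)\cdot\max\bigl(d(x,p),d(x,q)\bigr)$ shows that the whole theorem follows once we establish the pointwise identity
\begin{equation} \label{maxidentity}
\max\bigl(d(x,p),d(x,q)\bigr)=\max\bigl(d(x,g^+),d(x,g^-)\bigr)
\qquad\text{for all }x\in\mathbb{R}^2 .
\end{equation}

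To prove \eqref{maxidentity}, I would pass to the guide coordinates $u(z)=z_1+z_2$ and $v(z)=z_1-z_2$. From the elementary identity $|a|+|b|=\max\bigl(|a+b|,|a-b|\bigr)$ applied to $a=x_1-y_1$ and $b=x_2-y_2$, one gets $d(x,y)=\max\bigl(|u(x)-u(y)|,\,|v(x)-v(y)|\bigr)$, so taxicab distance becomes the Chebyshev distance in the $(u,v)$ coordinates. In these coordinates the guide lines are the level sets of $u$ and of $v$: indeed $gl^+(p)=\{v=v(p)\}$ and $gl^-(q)=\{u=u(q)\}$, so $g^+$ has coordinates $\bigl(u(q),v(p)\bigr)$, and symmetrically $g^-$ has coordinates $\bigl(u(p),v(q)\bigr)$, while $p=\bigl(u(p),v(p)\bigr)$ and $q=\bigl(u(q),v(q)\bigr)$. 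Writing $s=u(x)$ and $t=v(x)$, both sides of \eqref{maxidentity} then expand to the maximum of the very same four quantities $|s-u(p)|$, $|s-u(q)|$, $|t-v(p)|$, $|t-v(q)|$; the only difference between the left and right sides is how these four are grouped into two distances, so the two maxima agree. This coordinate computation is where the geometry does the real work: the pair $\{g^+,g^-\}$ carries exactly the same $u$-values and $v$-values as $\{p,q\}$, merely recombined, which is what makes the maxima equal. A pleasant feature is that this argument needs no appeal to standard position, since the coordinates of $g^\pm$ are computed for arbitrary $p$ and $q$.

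Finally I would dispose of the degenerate cases. When $r=0$ every filled Cassini set is empty and \eqref{unionofintersectionseq} reads $\emptyset=\emptyset$. When $r>0$ the equivalences above remain valid even at $x=p$ or $x=q$, where one factor vanishes, so no separate treatment is required. I expect the main obstacle to be \eqref{maxidentity}: at first glance it appears to demand a case analysis over the nine regions of Figure~\ref{structurefig}(b), and the insight that removes this difficulty is recognizing that in guide coordinates $g^+$ and $g^-$ are obtained from $p$ and $q$ by interchanging one coordinate, after which the equality of the two maxima is immediate.
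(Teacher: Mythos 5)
Your proof is correct, and it takes a genuinely different route from the paper's. The paper proves the two inclusions separately, each time invoking Lemma~\ref{guideHlemma} (for every $x$, $d(x,a)\leq\max\bigl(d(x,b),d(x,c)\bigr)$ whenever $b$ and $c$ lie on distinct guide lines through $a$), applied at $q$, at $g^+$, and at $g^-$, together with a case split on whether $d(x,p)\leq d(x,q)$. You instead collapse the whole statement into the single pointwise identity $\max\bigl(d(x,p),d(x,q)\bigr)=\max\bigl(d(x,g^+),d(x,g^-)\bigr)$ and prove it directly by passing to the coordinates $u=x_1+x_2$, $v=x_1-x_2$, in which $d$ becomes the Chebyshev distance and $g^{\pm}$ are obtained from $p$ and $q$ by swapping one coordinate; all the steps check out (the $(u,v)$-coordinates of $g^{\pm}$ follow correctly from the definitions of $gl^{\pm}$, and the min/max manipulations are valid because distances are nonnegative). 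In effect your identity is precisely the conjunction of the four instances of Lemma~\ref{guideHlemma} the argument needs: applying the lemma at $p$ and at $q$ gives $\max\bigl(d(x,p),d(x,q)\bigr)\leq\max\bigl(d(x,g^+),d(x,g^-)\bigr)$, and applying it at $g^+$ and at $g^-$ gives the reverse, so you have in passing reproved that lemma for this configuration via the $\ell^1$--$\ell^\infty$ isometry. What your route buys: both inclusions come out simultaneously, the reason the theorem is true becomes transparent (the pair $\{g^+,g^-\}$ carries the same $u$- and $v$-values as $\{p,q\}$, merely recombined), and the very same identity yields Theorem~\ref{intersectionofunionsthm} with no extra work, since its right-hand side also reduces to the condition $\min\bigl(d(x,p),d(x,q)\bigr)\cdot\max\bigl(d(x,g^+),d(x,g^-)\bigr)<r^2$. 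What the paper's route buys: it isolates Lemma~\ref{guideHlemma} as a reusable black box, which the authors single out as the engine for extending such characterizations to other two-focus loci.
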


        \begin{proof}
            This proof proceeds in two steps.  Suppose first that $x$ is an element of the set on the right of Equation~\eqref{unionofintersectionseq}.
            Specifically, suppose $x \in L(p,g^{+};r) \cap L(p,g^{-};r)$.  Then
            \begin{equation*}
                d(x, p) \cdot d(x, g^+) < r^2\
                \mbox{and}\ d(x, p) \cdot d(x, g^-) < r^2.
            \end{equation*}
            By Lemma~\ref{guideHlemma}, using $q$ for $a$ and $g^+$ and $g^-$ for $b$ and $c$, $d(x, q) \leq d(x, g^+)$ or $d(x, q) \leq d(x, g^-)$.  Pairing whichever inequality is satisfied with the corresponding inequality above, it follows that
            \begin{equation*}
                d(x, p) \cdot d(x, q) < r^2
            \end{equation*}
            and so $x \in L(p, q; r)$.
            
            An analogous argument works for $x \in L(q,g^{+};r) \cap L(q,g^{-};r)$, and therefore
            \begin{equation} \label{superseteqn}
                L(p,q;r) \supseteq \Bigl[L(p,g^{+};r) \cap L(p,g^{-};r)\Bigr] \cup \Bigl[L(q,g^{+};r) \cap L(q,g^{-};r)\Bigr].
            \end{equation}
            
            Second, suppose $x \in L(p, q; r)$, so $d(x, p) \cdot d(x, q) < r^2$.  Applying Lemma~\ref{guideHlemma} using $g^+$ for $a$, and $p$ and $q$ for $b$ and $c$, and substituting the resulting inequalities, at least one of the following is true:
            \begin{equation} \label{alphaeqn}
                d(x, g^+) \cdot d(x, q) < r^2
            \end{equation}
            or
            \begin{equation} \label{betaeqn}
                d(x, p) \cdot d(x, g^+) < r^2.
            \end{equation}
            Also, applying Lemma~\ref{guideHlemma}, this time using $g^-$ for $a$, at least one of the following is true:
            \begin{equation} \label{gammaeqn}
                d(x, g^-) \cdot d(x, q) < r^2
            \end{equation}
            or
            \begin{equation} \label{deltaeqn}
                d(x, p) \cdot d(x, g^-) < r^2.
            \end{equation}
            Note additionally that if $d(x, p) \leq d(x, q)$ then Inequality~\eqref{alphaeqn} implies Inequality~\eqref{betaeqn} and Inequality~\eqref{gammaeqn} implies Inequality~\eqref{deltaeqn}.  Hence, if $d(x, p) \leq d(x, q)$ then Inequalities~\eqref{betaeqn} and \eqref{deltaeqn} must be true which means $x \in L(p, g^+; r) \cap L(p, g^-; r)$.  By a similar argument, if $d(x, q) \leq d(x, p)$ then Inequalities~\eqref{alphaeqn} and \eqref{gammaeqn} must be true which means $x \in L(q, g^+; r) \cap L(q, g^-; r)$.  Combining these, it follows that
            \begin{equation} \label{subseteqn}
                L(p, q; r) \subseteq \Bigl[L(p,g^{+};r) \cap L(p,g^{-};r)\Bigr] \cup \Bigl[L(q,g^{+};r) \cap L(q,g^{-};r)\Bigr].
            \end{equation}

            Combining \eqref{superseteqn} and \eqref{subseteqn} establishes the result.
   
        \end{proof}

        Theorem~\ref{unionofintersectionsthm} shows that $L(p, q; r)$ can be expressed as a union of intersections.  The set $L(p, q; r)$ can also be expressed as an intersection of unions.

        \begin{main} \label{intersectionofunionsthm}
            Let $p, q \in (\mathbb{R}^2, d)$, let $r \geq 0$, and let $g^+$ and $g^-$ be the guide complements of $p$ and $q$.  Then
            \begin{equation} \label{intersectionofunionseq}
                L(p,q;r) = \Bigl[L(p,g^{+};r) \cup L(q,g^{+};r)\Bigr]
                    \cap \Bigl[L(p,g^{-};r) \cup L(q,g^{-};r)\Bigr].
            \end{equation}
        \end{main}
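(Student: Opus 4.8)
The plan is to prove the set equality in Equation~\eqref{intersectionofunionseq} by double inclusion, closely mirroring the structure of the proof of Theorem~\ref{unionofintersectionsthm} and again relying on Lemma~\ref{guideHlemma} as the central tool. I expect this to be slightly cleaner than the previous theorem because the logical structure of an intersection of unions matches more directly the ``at least one holds'' conclusions that Lemma~\ref{guideHlemma} produces.

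For the inclusion $L(p,q;r) \subseteq \bigl[L(p,g^{+};r) \cup L(q,g^{+};r)\bigr] \cap \bigl[L(p,g^{-};r) \cup L(q,g^{-};r)\bigr]$, I would take $x \in L(p,q;r)$, so that $d(x,p) \cdot d(x,q) < r^2$. Applying Lemma~\ref{guideHlemma} with $g^+$ playing the role of $a$ and $p$, $q$ as $b$ and $c$ gives $d(x,g^+) \leq d(x,p)$ or $d(x,g^+) \leq d(x,q)$; substituting the applicable inequality into $d(x,p)\cdot d(x,q) < r^2$ yields either $d(x,g^+)\cdot d(x,q) < r^2$ or $d(x,p)\cdot d(x,g^+) < r^2$, i.e. $x \in L(q,g^+;r) \cup L(p,g^+;r)$. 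Running the identical argument with $g^-$ in place of $g^+$ shows $x \in L(p,g^-;r) \cup L(q,g^-;r)$. Since $x$ lies in both unions, it lies in their intersection. This direction is essentially the first half of the argument that established Inequalities~\eqref{alphaeqn}--\eqref{deltaeqn}, repackaged.

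For the reverse inclusion, I would take $x$ in the right-hand intersection and show $d(x,p)\cdot d(x,q) < r^2$. The hypothesis gives that $x$ lies in each of the two unions, so at least one of the two factors involving $g^+$ is small and at least one of the two involving $g^-$ is small. The key observation, again by Lemma~\ref{guideHlemma} applied with $q$ (respectively $p$) as the apex point $a$ and the two guide complements $g^+, g^-$ as $b, c$, is that $d(x,q) \leq d(x,g^+)$ or $d(x,q) \leq d(x,g^-)$, and likewise $d(x,p) \leq d(x,g^+)$ or $d(x,p) \leq d(x,g^-)$. I expect the main obstacle to be the bookkeeping of the case analysis: from ``$x \in L(p,g^+;r)$ or $x \in L(q,g^+;r)$'' together with ``$x \in L(p,g^-;r)$ or $x \in L(q,g^-;r)$'' there are four combinatorial cases, and in each one I must pick the correct instance of the proximity inequality to collapse a factor of $g^\pm$ down to a factor of $p$ or $q$. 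For instance, if $x \in L(p,g^+;r) \cap L(p,g^-;r)$ one recovers $d(x,p)\cdot d(x,q) < r^2$ exactly as in the first step of the proof of Theorem~\ref{unionofintersectionsthm}; the genuinely new cases are the ``mixed'' ones such as $x \in L(p,g^+;r) \cap L(q,g^-;r)$, where one must verify that the surviving inequalities still combine to bound $d(x,p)\cdot d(x,q)$.

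An alternative and more economical route, which I would prefer if the direct case analysis proves cumbersome, is to derive Equation~\eqref{intersectionofunionseq} from Theorem~\ref{unionofintersectionsthm} purely formally. One abbreviates the four filled guide Cassini sets as $A = L(p,g^+;r)$, $B = L(p,g^-;r)$, $C = L(q,g^+;r)$, $D = L(q,g^-;r)$, so that Theorem~\ref{unionofintersectionsthm} reads $L(p,q;r) = (A \cap B) \cup (C \cap D)$ and the present claim reads $L(p,q;r) = (A \cup C) \cap (B \cup D)$. Distributivity of $\cap$ over $\cup$ gives $(A \cup C) \cap (B \cup D) = (A \cap B) \cup (A \cap D) \cup (C \cap B) \cup (C \cap D)$, so it suffices to show the two mixed terms $A \cap D$ and $B \cap C$ are already contained in $(A \cap B) \cup (C \cap D)$. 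This reduces the whole theorem to the two mixed cases, which are exactly the substantive cases identified above and can be dispatched by the same single application of Lemma~\ref{guideHlemma}; this framing isolates the real content and sidesteps the redundant bookkeeping.
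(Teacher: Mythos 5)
Your $\subseteq$ direction is exactly the paper's argument: apply Lemma~\ref{guideHlemma} with $a = g^+$ and again with $a = g^-$, substitute the resulting proximity inequality into $d(x,p)\cdot d(x,q) < r^2$, and land in each of the two unions. The $\supseteq$ direction is where you diverge. The paper avoids your four-way case analysis entirely by first splitting on whether $d(x,p) \leq d(x,q)$ or $d(x,q) \leq d(x,p)$: assuming the former and applying Lemma~\ref{guideHlemma} with $a = q$, one gets $d(x,q) \leq d(x,g^{\pm})$ for at least one sign, and then whichever member of the corresponding union $L(p,g^{\pm};r) \cup L(q,g^{\pm};r)$ contains $x$ immediately yields $d(x,p)\cdot d(x,q) < r^2$, since $d(x,p) \leq d(x,q) \leq d(x,g^{\pm})$ lets you shrink either factor. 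That is a cleaner organization than conditioning on the four membership combinations. Your route does work, and your distributivity reformulation correctly isolates the two mixed containments $L(p,g^+;r) \cap L(q,g^-;r) \subseteq L(p,q;r)$ and $L(p,g^-;r) \cap L(q,g^+;r) \subseteq L(p,q;r)$ as the only content beyond Theorem~\ref{unionofintersectionsthm}. But these are not dispatched by ``the same single application'' of Lemma~\ref{guideHlemma}, as you assert. Take $x$ with $d(x,p)\,d(x,g^+) < r^2$ and $d(x,q)\,d(x,g^-) < r^2$. If $d(x,q) \leq d(x,g^+)$ or $d(x,p) \leq d(x,g^-)$, direct substitution finishes; but the lemma does not rule out the remaining case in which only $d(x,q) \leq d(x,g^-)$ and $d(x,p) \leq d(x,g^+)$ hold. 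There you need both applications of the lemma simultaneously, and the conclusion comes from $d(x,p)^2 \leq d(x,p)\,d(x,g^+) < r^2$ and $d(x,q)^2 \leq d(x,q)\,d(x,g^-) < r^2$, so each factor is separately less than $r$ and the product is less than $r^2$. With that sub-case filled in, either of your two routes is complete.
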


        We prove this in a fashion analogous to the proof of Theorem~\ref{unionofintersectionsthm}.
        \begin{proof}
            First, suppose $x$ is an element of the set on the right of Equation~\eqref{intersectionofunionseq}.  Then $d(x, p) \cdot d(x, g^+) < r^2$ or $d(x, q) \cdot d(x, g^+) < r^2$, and also $d(x, p) \cdot d(x, g^-) < r^2$ or $d(x, q) \cdot d(x, g^-) < r^2$.
            
            Suppose $d(x, p) \leq d(x, q)$.  By Lemma~\ref{guideHlemma}, using $q$ for $a$ and $g^+$ and $g^-$ for $b$ and $c$, it follows that $d(x, q) \leq d(x, g^+)$ or $d(x, q) \leq d(x, g^-)$.  Substituting whichever is true, it follows that $d(x, p) \cdot d(x, q) < r^2$ and so $x \in L(p, q; r)$.  An analogous argument works if $d(x, q) \leq d(x, p)$ and so
            \begin{equation} \label{superset2eqn}
                L(p,q;r) \supseteq \Bigl[L(p,g^{+};r) \cup L(q,g^{+};r)\Bigr]
                    \cap \Bigl[L(p,g^{-};r) \cup L(q,g^{-};r)\Bigr].
            \end{equation}
            
            In the other direction, suppose $x \in L(p, q; r)$, so $d(x,p) \cdot d(x,q) < r^2$.  By Lemma~\ref{guideHlemma}, with $g^+$ for $a$ and $p$ and $q$ for $b$ and $c$, it follows that $d(x, g^+) \leq d(x, p)$ or $d(x, g^+) \leq d(x, q)$.  If the first is true, then
            \begin{equation*}
                d(x, q) \cdot d(x, g^+) \leq d(x, q) \cdot d(x, p) < r^2
            \end{equation*}
            so $x \in L(q, g^+; r)$.  Similarly, if the second is true, then $x \in L(p, g^+; r)$.  Together, these imply that $x \in L(p,g^{+};r) \cup L(q,g^{+};r)$.
            
            An analogous argument can be made to show $x \in L(p,g^{-};r) \cup L(q,g^{-};r)$ and since $x$ lies in both sets, it lies in their intersection, implying that
            \begin{equation} \label{subset2eqn}
                L(p,q;r) \subseteq \Bigl[L(p,g^{+};r) \cup L(q,g^{+};r)\Bigr]
                    \cap \Bigl[L(p,g^{-};r) \cup L(q,g^{-};r)\Bigr].
            \end{equation}
            Combining \eqref{superset2eqn} and \eqref{subset2eqn} establishes the result.
        \end{proof}

        Given the similarities in the proofs of Theorems~\ref{unionofintersectionsthm} and \ref{intersectionofunionsthm}, it may seem that these characterizations could be shown to be equivalent through general set manipulation.  In fact, using the distributive laws for unions and intersections, the following new relationships are established.

        \begin{corollary}
            Let $p, q \in (\mathbb{R}^2, d)$, let $r \geq 0$, and let $g^+$ and $g^-$ be the guide complements of $p$ and $q$.  Then
            \[
                L(p, q; r)
                    \subseteq [L(p, g^+; r) \cup L(q, g^-; r)]
                        \cap
                        [L(p, g^-; r) \cup L(q, g^+; r)]
            \]
            and
            \[
                [L(p, g^+; r) \cap L(q, g^-; r)]
                    \cup
                    [L(p, g^-; r) \cap L(q, g^+; r)]
                    \subseteq
                        L(p, q; r).
            \]
        \end{corollary}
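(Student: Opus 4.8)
The plan is to derive both inclusions purely formally from the two equalities already established in Theorems~\ref{unionofintersectionsthm} and \ref{intersectionofunionsthm}, by fully distributing each right-hand side and then discarding terms. To keep the bookkeeping manageable, I would first abbreviate the four filled guide Cassini sets as $A = L(p,g^{+};r)$, $B = L(p,g^{-};r)$, $C = L(q,g^{+};r)$, and $D = L(q,g^{-};r)$, and write $L = L(p,q;r)$. With this notation Theorem~\ref{unionofintersectionsthm} reads $L = (A\cap B)\cup(C\cap D)$ and Theorem~\ref{intersectionofunionsthm} reads $L = (A\cup C)\cap(B\cup D)$. The two claimed relations then become $L \subseteq (A\cup D)\cap(B\cup C)$ and $(A\cap D)\cup(B\cap C)\subseteq L$.

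For the second inclusion I would start from the intersection-of-unions form of Theorem~\ref{intersectionofunionsthm} and distribute $\cap$ over $\cup$, obtaining
\[
    L = (A\cup C)\cap(B\cup D) = (A\cap B)\cup(B\cap C)\cup(A\cap D)\cup(C\cap D).
\]
Since the right-hand side is a union that already contains the two terms $A\cap D$ and $B\cap C$, it contains the sub-union $(A\cap D)\cup(B\cap C)$, and so $(A\cap D)\cup(B\cap C)\subseteq L$ follows immediately.

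For the first inclusion I would dually start from the union-of-intersections form of Theorem~\ref{unionofintersectionsthm}, first distributing the outer $\cup$ across $A\cap B$ and then expanding each resulting factor, to obtain
\[
    L = (A\cap B)\cup(C\cap D) = (A\cup C)\cap(B\cup C)\cap(A\cup D)\cap(B\cup D).
\]
This expresses $L$ as an intersection of all four pairwise unions of a $p$-set with a $q$-set; dropping the two factors $A\cup C$ and $B\cup D$ can only enlarge the set, so $L \subseteq (A\cup D)\cap(B\cup C)$.

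Both arguments are elementary once the expansions are recorded, so I do not expect any genuine analytic obstacle here; the only thing requiring care is applying the distributive laws in the correct order and then checking that the target expression sits inside the fully expanded intersection (for the superset claim) or contains two of the expanded union's terms (for the subset claim). It is worth observing, as a sanity check on the bookkeeping, that the factors discarded in the first computation, namely $(A\cup C)\cap(B\cup D)$, are exactly the right-hand side of Theorem~\ref{intersectionofunionsthm}, and the terms discarded in the second computation, namely $(A\cap B)\cup(C\cap D)$, are exactly the right-hand side of Theorem~\ref{unionofintersectionsthm}; in each case the discarded piece equals $L$ itself, which is consistent with the two resulting relations being inclusions rather than equalities in general.
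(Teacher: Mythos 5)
Your proposal is correct and is exactly the argument the paper intends: the paper derives both inclusions "using the distributive laws for unions and intersections" applied to Theorems~\ref{unionofintersectionsthm} and \ref{intersectionofunionsthm}, leaving the details to the reader, and your expansions $(A\cup C)\cap(B\cup D)=(A\cap B)\cup(B\cap C)\cup(A\cap D)\cup(C\cap D)$ and $(A\cap B)\cup(C\cap D)=(A\cup C)\cap(B\cup C)\cap(A\cup D)\cap(B\cup D)$ supply precisely those details. Your closing observation that the discarded pieces in each expansion are themselves equal to $L(p,q;r)$ by the two theorems is a correct and worthwhile sanity check.
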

        In general, these relationships are proper.  To clarify the situation, it is worth noting that $p$ and $q$ are the guide complements to $g^+$ and $g^-$, and using Theorems~\ref{unionofintersectionsthm} and \ref{intersectionofunionsthm} from this alternate perspective, it follows that in fact
        \begin{corollary}
            Let $p, q \in (\mathbb{R}^2, d)$, let $r \geq 0$, and let $g^+$ and $g^-$ be the guide complements of $p$ and $q$.  Then
            \[
                [L(p, g^+; r) \cup L(q, g^-; r)]
                    \cap
                    [L(p, g^-; r) \cup L(q, g^+; r)]
                    = L(p, q; r) \cup L(g^+, g^-; r)    
            \]
            and
            \[
                [L(p, g^+; r) \cap L(q, g^-; r)]
                    \cup
                    [L(p, g^-; r) \cap L(q, g^+; r)]
                    =
                        L(p, q; r) \cap L(g^+, g^-; r).
            \]
        \end{corollary}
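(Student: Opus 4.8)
The plan is to exploit the reciprocity between the pairs $\{p,q\}$ and $\{g^+, g^-\}$, apply Theorems~\ref{unionofintersectionsthm} and \ref{intersectionofunionsthm} to \emph{both} pairs, and then finish with nothing more than the distributive laws. Throughout I would abbreviate $A = L(p, g^+; r)$, $B = L(p, g^-; r)$, $C = L(q, g^+; r)$, and $D = L(q, g^-; r)$, using that $L$ is symmetric in its first two arguments since $d(x, \cdot)$ is symmetric and multiplication is commutative. First I would confirm the reciprocity asserted above, that $p$ and $q$ are exactly the guide complements of $g^+$ and $g^-$. Because $g^+ = gl^+(p) \cap gl^-(q)$ lies on $gl^+(p)$ and on $gl^-(q)$, we have $gl^+(g^+) = gl^+(p)$ and $gl^-(g^+) = gl^-(q)$, and similarly $gl^-(g^-) = gl^-(p)$ and $gl^+(g^-) = gl^+(q)$. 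Intersecting the appropriate guide lines gives $gl^+(g^+) \cap gl^-(g^-) = gl^+(p) \cap gl^-(p) = p$ and $gl^-(g^+) \cap gl^+(g^-) = gl^-(q) \cap gl^+(q) = q$, so $\{p,q\}$ is the set of guide complements of $\{g^+, g^-\}$.

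Next I would record the four representations the two theorems produce. Applying Theorem~\ref{unionofintersectionsthm} to $(p,q)$ and to $(g^+, g^-)$ and invoking the symmetry of $L$ gives
\begin{align*}
L(p, q; r) &= (A \cap B) \cup (C \cap D), & L(g^+, g^-; r) &= (A \cap C) \cup (B \cap D),
\end{align*}
while Theorem~\ref{intersectionofunionsthm} applied to the same two pairs gives
\begin{align*}
L(p, q; r) &= (A \cup C) \cap (B \cup D), & L(g^+, g^-; r) &= (A \cup B) \cap (C \cup D).
\end{align*}
Here neither representation depends on which complement is labeled $g^+$ and which $g^-$: in the first theorem the grouping is symmetric in $g^+, g^-$, and in the second, swapping them merely exchanges the two factors of the intersection.

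Finally I would substitute and simplify, matching each identity to the representation that makes the algebra purely formal. For the first identity (a union on the right) I would use the union-of-intersections forms, so the right-hand side becomes $(A \cap B) \cup (C \cap D) \cup (A \cap C) \cup (B \cap D)$; distributing the left-hand side gives $(A \cup D) \cap (B \cup C) = (A \cap B) \cup (A \cap C) \cup (B \cap D) \cup (C \cap D)$, and the two coincide. For the second identity (an intersection on the right) I would use the intersection-of-unions forms, so the right-hand side is $(A \cup C) \cap (B \cup D) \cap (A \cup B) \cap (C \cup D)$; pairing the factors yields $\bigl[(A \cup B) \cap (A \cup C)\bigr] \cap \bigl[(B \cup D) \cap (C \cup D)\bigr] = \bigl[A \cup (B \cap C)\bigr] \cap \bigl[D \cup (B \cap C)\bigr] = (A \cap D) \cup (B \cap C)$, which is the left-hand side.

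The set-algebra is routine, and the reciprocity is short; the one genuinely delicate point is the choice of representation in the last step. If one instead feeds the second identity the union-of-intersections forms, the right-hand side collapses to a union of triple intersections that equals $(A \cap D) \cup (B \cap C)$ only by virtue of the extra geometric relations among $A, B, C, D$, not by the distributive laws alone. Pairing the union-of-intersections representation with the first identity and the intersection-of-unions representation with the second is exactly what keeps the final step formal. Since Theorems~\ref{unionofintersectionsthm} and \ref{intersectionofunionsthm} hold for all configurations of $p$ and $q$, no separate treatment of degenerate positions is required.
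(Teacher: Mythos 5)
Your proof is correct and follows exactly the route the paper intends: the paper leaves the proof to the reader but explicitly points out that $p$ and $q$ are the guide complements of $g^+$ and $g^-$ and that applying Theorems~\ref{unionofintersectionsthm} and \ref{intersectionofunionsthm} from that alternate perspective, together with set manipulation, gives the result. Your verification of the reciprocity and your distributive-law computations (including the careful pairing of each identity with the representation that keeps the final step purely formal) all check out.
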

        We leave the proofs of these results to the interested reader.

 \section{Concluding remarks} \label{conclusionsec}

There are a number of aspects of this work that we feel warrant further consideration, and we outline our thoughts in three interconnected directions here.

First, as mentioned in the introduction, the work in \cite{BCFHMNSTV} characterizing Apollonian sets as the union of guide Apollonian sets was surprising.  Finding a similar characterization for Cassini sets indicates that there may be a much more general result that would unify these examples.  A simple place to start would be to confirm that such a characterization exists for the taxicab conic sections, and some brief sketches indicate that such characterizations do indeed exist.  Additionally, the first author was able to explore a wide variety of sets, each defined as the locus of points satisfying an equation involving the distances to a pair of focal points, where similar characterizations seem to hold.

We expect that the proof of a general characterization would be similar to the work in \cite{BCFHMNSTV} and our work here.  The unifying result seems to be Lemma~\ref{guideHlemma} which provides key distance inequalities for all points in the plane.  As such, we feel that this lemma is of fundamental importance to taxicab geometry.

Another intriguing aspect that appears here, but not in \cite{BCFHMNSTV}, is the dual nature of Theorems~\ref{unionofintersectionsthm} and \ref{intersectionofunionsthm}.  Weather this duality is a consequence of the symmetry in the defining equation or something else is not clear at this point, but generalizing the sets which enjoy such a characterization may shed light on the situation.

Finally, in the recent work \cite{FHS}, taxicab conic sections were explored through the lens of slicing cones in taxicab 3-space, resulting in geometric characterizations of these sets.  While differing somewhat from the more traditional approach of using the distance formulations for the various conic sections, this work illustrates that often, structures in taxicab space can be characterized geometrically.  This more geometric perspective was also advanced for Apollonian sets in \cite{BCFHMNSTV}, and continued here for Cassini sets.  All of these projects have demonstrated that taxicab space exhibits a beautiful geometry that is often overlooked when attention is restricted to its purely analytic characteristics.


    \bibliographystyle{amsalpha}
    \bibliography{taxicab-cassini.bib}
\end{document}